\numberwithin{equation}{section}
\declaretheoremstyle[
  bodyfont=\normalfont\itshape,
  headformat=\NAME\ \NUMBER\NOTE,
]{myplain}
\declaretheoremstyle[
  headformat=\NAME\ \NUMBER\NOTE,
]{mydefinition}
\newcommand{\envqed}{{\lower-0.3ex\hbox{$\triangleleft$}}}
\declaretheorem[style=myplain,numberwithin=section]{theorem}
\declaretheorem[style=myplain,numberlike=theorem]{lemma}
\declaretheorem[style=myplain,numberlike=theorem]{corollary}
\declaretheorem[style=mydefinition,numberlike=theorem,qed=\envqed]{definition}
\declaretheorem[style=mydefinition,numberlike=theorem,qed=\envqed]{remark}
\let\epsilon\varepsilon
\let\phi\varphi
\let\rho\varrho
\providecommand\R{}
\renewcommand{\R}{\mathbb{R}}
\renewcommand{\vec}[1]{\underline{#1}}
\newcommand{\I}{\operatorname{I}}
\newcommand{\mean}[1]{{\{\mkern-6mu\{}#1{\}\mkern-6mu\}}}
\newcommand{\rhoj}{\begin{pmatrix}
\rho\\
\tg
    \end{pmatrix}}
\newcommand{\vel}{\nu}
\newcommand{\z}{z}
\newcommand{\Drho}{D^{\rho}}
\newcommand{\Dg}{D^{\tg}}
\newcommand{\ds}[1]{d_{[#1]}}
\renewcommand{\mean}[1]{\left\langle #1 \right\rangle}
\newcommand{\tg}{\tilde{g}}
\newcommand{\orcid}[1]{ORCID:~\href{https://orcid.org/#1}{#1}}
\newenvironment{keywords}{\par\textbf{Key words.}}{\par}
\newenvironment{AMS}{\par\textbf{AMS subject classification.}}{\par}
\title{Domain-of-dependence-stabilized cut-cell discretizations of linear
kinetic models with summation-by-parts properties
}
\author[1]{Louis~Petri\thanks{\orcid{0009-0005-5322-3982}}}
\author[2]{Sigrun~Ortleb\thanks{\orcid{0000-0003-1341-853X}}}
\author[3]{Gunnar~Birke\thanks{\orcid{0000-0002-2008-6679}}}
\author[3]{Christian~Engwer\thanks{\orcid{0000-0002-6041-8228}}}
\author[1]{Hendrik~Ranocha\thanks{\orcid{0000-0002-3456-2277}}}
\affil[1]{Institute of Mathematics, Johannes Gutenberg University Mainz, Staudingerweg 9, 55128 Mainz, Germany}
\affil[2]{Department of Mathematics, RWTH Aachen University, Schinkelstraße 2, 52062 Aachen, Germany}
\affil[3]{Applied Mathematics, University of M\"unster, Orl\'eans-Ring 10, 48149 M\"unster, Germany}
\date{January 9, 2026} 
\begin{document}

\maketitle

\begin{abstract}
\noindent
  We employ the summation-by-parts~(SBP) framework to extend
the recent domain-of-dependence~(DoD) stabilization for cut cells to linear
kinetic models in diffusion scaling. Numerical methods for these models are
challenged by increased stiffness for small scaling parameters and the
necessity of asymptotics preservation regarding a parabolic limit equation.
As a prototype model, we consider the telegraph equation in one spatial
dimension subject to periodic boundary conditions with an asymptotic limit
given by the linear heat equation.
We provide a general semidiscrete stability result for this model when
spatially discretized by arbitrary periodic (upwind) SBP operators and
formally prove that the fully discrete scheme is asymptotic preserving.
Moreover, we prove that DoD with central numerical
fluxes leads to periodic SBP operators. Furthermore, we show that adapting
the upwind DoD scheme yields periodic upwind SBP operators. Consequently,
the DoD stabilization possesses the desired properties considered in the first part
of this work and thus leads to a stable and asymptotic preserving scheme for
the telegraph equation. We back our theoretical results with numerical
simulations and demonstrate the applicability of this cut-cell
stabilization for implicit time integration in the heat equation limit.

\end{abstract}

\begin{keywords}
  asymptotic-preserving schemes,
  cut-cell meshes,
  discontinuous Galerkin methods,
  domain-of-dependence stabilization,
  telegraph equation,
  summation-by-parts operators,
  small-cell-problem
\end{keywords}

\begin{AMS}
  65M06, 
  65M12,  
  65M20, 
  65M60, 
  65M70  
\end{AMS}

\section{Introduction}
In many applications of hyperbolic partial differential equations, the complexity
of the underlying geometry challenges the human and
computational resources required to construct a body-fitted mesh. Cut-cell methods
avoid these tasks by simply cutting along the geometry boundaries out of a
Cartesian background mesh. Of course, one has to deal with different problems by
following this strategy, the most obvious being the irregularity and sizes of the
cut cells. Coupling a spatial discretization on such a mesh with an explicit
time integration method, the latter issue results in the well-known \textit{small-cell}
problem which potentially requires arbitrarily small time steps
to obtain a stable fully-discrete scheme. Therefore, one desires a special treatment
of these small cut cells because of their potential irregularity and cell size.

Cut-cell methods for parabolic differential equations are widely explored (e.g.,
\cite{bastian2009unfitted, barret1987, burmann2010ghost, hansbo2002}).
While the development for hyperbolic finite-volume cut-cell methods is a topic of research
since some decades, e.g., with the h-box method \cite{berger2003hbox, berger2012hbox},
stabilization techniques for discontinuous Galerkin (DG) methods gained popularity over
the last years.
Variants of the ghost penalty method,
originally designed for elliptic cut-cell problems, are examined in
\cite{Sticko2019,Guerkan2020,Fu2021}.
Recently, the state redistribution method has been extended from
finite volume schemes \cite{Giuliani2021} to DG methods \cite{BERGER2021,TAYLOR2025}.
Here, we consider the domain-of-dependence (DoD) stabilization, that was introduced in
\cite{engwer2020stabilized}.
DoD addresses the DG semidiscretization by redistributing fluxes that correct the mass
distribution and the domain of dependence for arbitrary small cells and their neighbors.
Recent results provide stabilization techniques for nonlinear hyperbolic equations in one
spatial dimension \cite{may2022dod} and for linear systems in two dimensions
\cite{birke2023dod,birke2024error, streitbuerger2022}.
Proven stability properties of DoD-extended DG schemes include semidiscrete
stability for linear systems \cite{birke2023dod, streitbuerger2022} as well as fully-discrete
stability \cite{petri2025domain}. According to \cite{may2022dod}, the
first-order scheme is monotone.
The state redistribution method provides similar properties
\cite{TAYLOR2025,Berger2024}.

In the realm of hyperbolic balance laws, kinetic equations are a subclass describing the evolution of quantities (e.g., a density) depending on time and position space as well as velocity, where the model includes both linear transport of the considered quantity and a potentially nonlinear collision term.
When considered in diffusion scaling using a scaling parameter $\epsilon$, these models are subject to stiffness both in the transport and the collision term for small $\epsilon$ which requires some degree of implicitness for the time discretization of both contributors. Furthermore, in the limit $\epsilon\rightarrow 0$, the kinetic model in diffusion scaling generally relaxes towards a macroscopic model given by a parabolic limit equation which changes the properties of the underlying system entirely. One of the simplest model problems is
the telegraph equation, a linear two-velocity model that relaxes to the heat equation. Designing methods for the kinetic equation that
provide consistent approximations in the asymptotic limit is an active topic of current research.
DG schemes for the telegraph equation and for some extended systems are developed in
\cite{Jang_etal:2014,Jang_etal:2014_analysis,Peng2021schur}.

We combine the two topics of cut-cell meshes and kinetic models by extending the DoD stabilization
for DG methods on cut-cell meshes to the telegraph equation, ensuring stability
under feasible time step sizes, as well as preserving the asymptotic limit, whereby the limit scheme yields a
DG DoD discretization of the diffusion operator. To prove energy stability, we draw connections
to summation-by-parts (SBP) operators (e.g., \cite{MATTSSON2017diagonal,fernandez2014review,svard2014review}),
which are designed to mimic integration-by-parts and therefore
enable the transfer of energy properties of the continuous PDE to the semidiscretization.

The content of this article can be split up in two parts: The first part introduces
the DG method for the telegraph equation and provides general theoretical
results on energy stability by using periodic (upwind) SBP operators
(Sections~\ref{sec:preliminaries} and \ref{sec:analysis}) as well as asymptotic preservation of the fully discrete scheme.
The second part constructs the cut-cell
stabilization and implements it into the SBP framework
(Sections~\ref{sec:cut_cell_intro} and \ref{sec:SBP}). The full article is structured as follows:
In Section~\ref{sec:preliminaries}, we introduce the telegraph equation with its
spatial DG and temporal IMEX-RK discretizations. Furthermore, we tie the method to
periodic (upwind) SBP operators. In Section~\ref{sec:analysis}, we provide a
semidiscrete stability result under the assumption that periodic (upwind) SBP
operators are used to discretize the spatial derivatives. This result can be
considered as independent of further details of the semidiscretization. In
Section~\ref{sec:cut_cell_intro}, we review the domain-of-dependence stabilization
for cut-cell meshes for the linear advection equation, and apply it to the DG
semidiscretization of the telegraph equation introduced in
Section~\ref{sec:preliminaries}. Section~\ref{sec:SBP} verifies that
the DoD stabilization for central numerical fluxes results in a periodic SBP operator.
We further adapt the method for upwind numerical fluxes to also obtain periodic
upwind SBP operators in that case. Due to this result, the DoD scheme is included within the stability theory
of Section~\ref{sec:analysis}.
Finally, we verify our theoretical findings by numerical results in Section~\ref{sec:numerics}
and elaborate on the utility of specific stabilization choices.

\section{Preliminaries}
\label{sec:preliminaries}
\subsection{The telegraph equation}
We consider the telegraph equation in diffusion scaling,
a well-studied linear kinetic transport model
\cite{ortleb2024unconditional, Jang_etal:2014}.
In one spatial dimension, it reads
\begin{align}\label{eq:kin_model}
 \epsilon \partial_t f + v\partial_x f = \frac{1}{\epsilon}\left(\mean f-f\right)\,,
\end{align}
where $f(x,v,t)$ is the probability distribution function of particles depending on
the spatial position $x\in\Omega_x$, the velocity $v\in\Omega_v=\{-1,1\}$, and the
time $t\in\R^+_0$. Furthermore, the mean
\[\mean f = \frac12\left(f|_{v=-1}+f|_{v=1}\right)\]
can be considered as the macroscopic density of particles, depending only on $x$ and $t$.
Obviously, the average in velocity is normalized such that $\mean 1 = 1$ and $\mean v = 0$.

To construct asymptotic preserving schemes, the framework of micro-macro decomposition \cite{LemouMieussens:2008}
is widely used. Hereby, $f$ is orthogonally decomposed into $f=\rho +\epsilon g$, with
macroscopic density $\rho=\mean{f}$ and non-equilibrium part
$g=\frac1\epsilon(f-\rho) = \frac1\epsilon(f-\mean f)$ fulfilling $\mean g=0$.
The resulting micro-macro decomposed system reads
\begin{subequations}
\label{eq:micromacro}
\begin{align}
 \partial_t \rho + \partial_x \mean{vg} &= 0\,, \label{eq:micromacro1}\\
  \epsilon \partial_t g + \partial_x (vg) - \mean{\partial_x (vg)} + \frac1\epsilon v\partial_x\rho &= - \frac{1}{\epsilon}g\,, \label{eq:micromacro2}
\end{align}
\end{subequations}
and preserves the property $\mean g=0$ in time if it is fulfilled by the initial
condition $g_0=g(x,v,t=0)$.

Setting $g_i(x,t) = \frac1\epsilon\left(f(x,i,t)-\rho(x,t)\right),\ i=1,-1$, and
using $\tg:=g_1=-g_{-1}$ due to $\mean g=0$, the two-velocity model in micro-macro
decomposition can be rewritten as
\begin{subequations}
\label{eq:kinetic_rewritten}
\begin{align}
 \partial_t \rho + \partial_x \tg &= 0\,, \label{eq:kinetic_rewritten1}\\
  \epsilon^2 \partial_t \tg + \partial_x \rho  &= -\tg\,, \label{eq:kinetic_rewritten2}
\end{align}
\end{subequations}
and the third equation for $g_{-1}(x,t)$ becomes redundant. The system~\eqref{eq:kinetic_rewritten}
slightly differs from the classical reformulation of the scalar linear wave equation
as a system of first-order PDEs due to the source term in the second equation.

For $\epsilon\rightarrow 0$, this kinetic model converges to the linear heat
equation for the macroscopic density $\rho = \mean f$, given by
\begin{equation}\label{eq:heat_equation}
 \partial_t \rho = \partial_{xx}\rho\,.
\end{equation}
This follows formally by letting $\epsilon\rightarrow 0$ in~\eqref{eq:kinetic_rewritten2}
and inserting the resulting equation $\partial_x\rho=-\tg$ into~\eqref{eq:kinetic_rewritten1}.

\subsection{Spatial semidiscretization of the base scheme} \label{subsec:semidiscr_background}
As the base scheme, we consider the discontinuous Galerkin semidiscretization
as introduced by Jang et al.\ in \cite{Jang_etal:2014}.
To discretize the spatial domain $\Omega_x=[x_\text{min}, x_{\text{max}}]$,
we split it into $N$ cells
$E_i = (x_{i-\frac12}, x_{i+\frac12})$ for $i = 1, \hdots, N$ with vertices
$x_\text{min}=x_{\frac12} < x_{\frac32} < x_{\frac52} < \hdots < x_{N+\frac12} = x_\text{max}$.
Further, we consider the polynomial space of degree $p$, denoted by $\mathcal{P}^p$,
and the discrete function space
$$ \mathcal{V}_h^p(\Omega_x) = \left\{w_h \in L^2(\Omega) :w_h|_{E_i} \in \mathcal{P}^p, \; i \in 1,\hdots N \right\}.$$
Since the elements of $\mathcal{V}_h^p(\Omega)$ are not well-defined at the vertices,
we also introduce the jump and the mean value
 $$\llbracket w_h \rrbracket_{i+\frac12} = w_h(x_{i+\frac12})|_{E_{i}}-w_h(x_{i+\frac12})|_{E_{i+1}}, \quad
  \{w_h\}_{i+\frac12} = \frac{1}{2}\left(w_h(x_{i+\frac12})|_{E_{i}}+w_h(x_{i+\frac12})|_{E_{i+1}}\right).$$
To discretize~\eqref{eq:kinetic_rewritten}, we need to choose numerical fluxes
for the spatial derivatives.
In principle, this can be done by diagonalizing the linear hyperbolic system~\eqref{eq:kinetic_rewritten}
and applying standard upwind/downwind fluxes to the characteristic variables. However,
it is well-known that the resulting numerical scheme, even with implicit
time integration, leads to the wrong diffusion limit with a wrong (higher)
diffusivity coefficient in the asymptotic limit \cite{NaldiPareschi:1998}.
This can be prevented by employing different spatial discretizations for those
terms in the micro-macro decomposition containing $\partial_x$ before rewriting
\eqref{eq:micromacro} as the two-by-two system~\eqref{eq:kinetic_rewritten}.
There are two strategies considered by Jang et al.\ \cite{Jang_etal:2014}:
Using alternating upwind fluxes or central fluxes.
In general, we can write the semidiscrete method as a solution
$\rho_h(t), \tg_h(t)\in\mathcal{V}_h^p(\Omega_x)$,
such that
\begin{equation}\label{eq:semidiscretization_bilinearform}
    \begin{aligned}
        (\partial_t\rho_h(t), \phi_h)_{L^2(\Omega_x)}&+a_h^{\{-, +, \z\}}(\tg_h(t), \phi_h)=0, \; &\forall \phi_h \in\mathcal{V}_h^p(\Omega_x), \\
        (\partial_t\tg_h(t), \psi_h)_{L^2(\Omega_x)}&+\frac{1}{\epsilon^2}a_h^{\{+, -, \z\}}(\rho_h(t), \psi_h)+ \frac{1}{\epsilon}b_h(\tg_h(t), \psi_h) =
        -\frac{1}{\epsilon^2}(\tg_h(t), \psi_h)_{L^2(\Omega_x)},
        \; &\forall \psi_h \in\mathcal{V}_h^p(\Omega_x).
    \end{aligned}
\end{equation}
$a_h^\delta$ is in this case the standard DG discretization of the spatial derivative
with an upwind ($\delta=-$),
downwind ($\delta=+$) or central ($\delta = \z$) numerical flux, given by
\begin{equation}\label{eq:a_h}
a_h^\delta(u_h, w_h)= -\sum\limits_{i=1}^N\int\limits_{E_i}u_h\partial_x w_h \dif x
            + \sum\limits_{i=1}^N \mathcal{H}^\delta(u_i, u_{i+1})(x_{i+\frac12})\llbracket w_h \rrbracket_{i+\frac12},
\end{equation}
where
\begin{align*}
    u_i={u_h}|_{E_i}, \quad \mathcal{H}^{\delta}(a, b)(x)&:=
    \begin{cases}
        \hfil b(x), \quad &\delta = +,\\ \hfil a(x),
         \quad &\delta = -, \\ \dfrac{a(x)+b(x)}{2}, \quad &\delta = \z.
    \end{cases}
\end{align*}
The bilinear form $b_h(\tg_h, \psi)$ discretizes the term $\partial_x (vg) - \mean{\partial_x (vg)}$ of~\eqref{eq:micromacro2} for $v=1$,
that vanishes when transforming to the system~\eqref{eq:kinetic_rewritten}.
It is obtained by using the intermediate formulation
$$\partial_x (vg) - \mean{\partial_x (vg)}\stackrel{v=1}{=}\frac12\left(\partial_xg_1+\partial_x g_{-1}\right)$$
and discretizing the respective derivatives with upwind/downwind fluxes,
according to the inherent velocity directions of $g_1, g_{-1}$.
This leads to
$$b_h(\tg_h, \psi_h) = \frac{1}{2}\left(a_h^{-}(\tg_h, \psi_h)-a_h^{+}(\tg_h, \psi_h)\right).$$
According to \cite{Jang_etal:2014, Jang_etal:2014_analysis}, including this additional
discretization term results in a more favorable time step restriction of
$\Delta t = \mathcal{O}(\epsilon\min_i\{\Delta x_i\})$ in the convective regime
$\epsilon = \mathcal{O}(1)$ when specific implicit-explicit time stepping is applied, as considered in Section~\ref{subsec:imex_background_scheme}.

\subsection{Semidiscretization in matrix-vector/SBP formulation}\label{subsec:semidiscr_matrix_SBP}
In the following, we will write the semidiscretization~\eqref{eq:semidiscretization_bilinearform}
into a matrix-vector notation that aligns with the SBP framework.
This can be motivated by considering the
linear advection equation $\partial_t u+\vel\partial_x u=0$ with constant advection speed $\vel\in\R$ and its DG semidiscretization given by
\begin{equation*}
(\partial_t u_h(t),w_h) + \vel a_h^\delta(u_h(t), w_h) = 0,\quad \forall w_h \in\mathcal{V}_h^p(\Omega_x),
\end{equation*}
where $a_h^\delta$ is given in \eqref{eq:a_h}. The choice of $\delta$
is restricted by direction of the advection speed $\vel$: For $\vel>0$ one has to use a left-biased (upwind) or
central flux ($\delta \in \{-, \z\}$),
while for $\vel<0$ one has to use a right-biased or central flux ($\delta \in \{+, \z\}$) to obtain a stable scheme.
After choosing a basis for
$\mathcal{V}_h^p(\Omega_x)$ and a quadrature formula for the involved integrals,
we can write $a_h$ and the inner product $(\cdot,\cdot)_{L^2(\Omega_x)}$ using matrix-vector-multiplications, i.e.,
\begin{equation}\label{eq:bilinear_to_matrix_notation}
a_h^{\{+,-,\z\}}(u_h, w_h)=\underline{w}^TMD^{\{+,-,\z\}}\underline{u}, \quad (u_h, w_h)_{L^2(\Omega_x)} = \underline{w}^TM\underline{u},
\end{equation}
where the vectors $\underline{u},\underline{w}$ collect the nodal values of $u_h,w_h$ at the quadrature nodes, the matrices $D^{\{+,-,\z\}}$ are discrete derivative matrices, and $M$ is the mass matrix.

In general, given a bilinear operator $D$ and a suitable symmetric and positive definite matrix
$M$ (the mass matrix in all our applications), $D$ is an SBP operator, if it mimics
integration-by-parts by
\begin{equation}
MD + D^T M = \text{diag}\{-1, 0, \hdots, 0, 1\},
\end{equation}
where the matrix on the right-hand side incorporates the boundary conditions.
For simplicity, we stick to periodic boundary conditions, so usually $D$ is called a
periodic SBP operator, if
\begin{equation}\label{eq:_definition_of_sbp_operator}
    MD + D^T M = \mathbf 0.
\end{equation}
A dual-pair of operators $D^+, D^-$, employed with a positive symmetric matrix $M$,
are periodic upwind SBP operators, if
\begin{subequations}
\label{eq:_def_upwindSBP}
\begin{align}
    &MD^+ + {D^-}^TM = \mathbf{0},\label{eq:_def_upwindSBP1} \\
    &M\left(D^+ -D^-\right) \text{ is negative semidefinite}. \label{eq:_def_upwindSBP2}
\end{align}
\end{subequations}
SBP and upwind SBP properties of discontinuous Galerkin schemes have been investigated
in \cite{gassner2013skew,carpenter2014entropy,ranocha2021broadclass,ortleb2023stability}.
Therefore, it is well-known that the cell-wise formulation of the discontinuous
Galerkin scheme on each DG element results in a generalized SBP operator while
the interface terms containing numerical fluxes can be considered as
simultaneous approximation terms (SATs). Furthermore, including the numerical
fluxes on cell interfaces into the global DG operator on the complete spatial
domain yields a dual-pair of upwind SBP operators.

We want to take advantage of this SBP framework for the telegraph equation
and therefore proceed with
the semidiscretization \eqref{eq:semidiscretization_bilinearform}.
With \eqref{eq:bilinear_to_matrix_notation}, we consider
\begin{equation}\label{eq:D1_D2_pairs}
    \left(\Drho, \Dg\right) \in \left\{\left(D^\z, D^\z\right), \left(D^+, D^-\right), \left(D^-, D^+\right)\right\},
\end{equation}
that resemble the alternating upwind/downwind or central flux pairs and are used
for the spatial derivatives in the respective first and second component of the
PDE system.
The variational formulation \eqref{eq:semidiscretization_bilinearform} is then rewritten in matrix-vector-formulation by
\begin{equation}\label{eq:semidiscretization_matrixform}
    \begin{aligned}
        M\underline{\rho}_t&+M\Drho\underline{\tg}=\underline{0},\\
        M\underline{\tg}_t&+\frac{1}{\epsilon^2}M\Dg\underline{\rho}+\frac{1}{2\epsilon}M\left(D^- - D^+\right)\underline{\tg}=-\frac{1}{\epsilon^2}M\underline{\tg}.
    \end{aligned}
\end{equation}

\begin{remark}\label{rmk:SBP_bilinear_equivalence}
    System \eqref{eq:semidiscretization_matrixform} is solely a change in notation.
    Indeed, we can formulate the desired properties also in the shape of bilinear forms:
    The operator \eqref{eq:a_h} is a periodic SBP operator, if and only if it is skew symmetric,
    because the transposition of the matrix is equivalent to switching the arguments. Furthermore,
    the operators are independent of the choice of basis, as long as all integrals are
    evaluated exactly, which is the case for this linear equation.
    Analogously, the bilinear forms with upwind numerical fluxes yield
    periodic upwind SBP properties satisfying the equivalent conditions
    \begin{equation}\label{eq:bilinearform_upwind_sbp}
        \begin{aligned}
            a^{+}_h(u_h, w_h)& = -a^{-}_h(w_h, u_h), \\
            a^{+}_h(u_h, u_h)& - a^{-}_h(u_h, u_h) \le 0,
        \end{aligned}
    \end{equation}
    for all $u_h,w_h\in\mathcal{V}_h^p(\Omega_x)$. We will take advantage of both of the notations, depending on which form is more convenient
    for the analysis.
\end{remark}

\subsection{Time discretization of the base scheme}\label{subsec:imex_background_scheme}
As the system \eqref{eq:semidiscretization_matrixform} becomes stiff for small $\epsilon > 0$,
an implicit treatment of the respective terms is necessary to avoid a severe
time step restriction of order $\mathcal{O}(\epsilon^2\Delta x)$. Fully
implicit time integration comes with the additional cost of solving a linear
system in every internal stage. To take computational advantage of both  classes,
implicit-explicit (IMEX) approaches are usually considered. Further, a careful choice
of the IMEX splitting is necessary to finally obtain an asymptotic preserving scheme.
In this work, we align with the strategy of \cite{Jang_etal:2014} and discretize
only the terms of order ${\mathcal O}(1/\epsilon^2)$ in equation \eqref{eq:micromacro2}
implicitly.

An IMEX additive Runge-Kutta method with $s$ stages applied to the system of ODEs
$u_t = f(u)+g(u), u(0)=u^0,$
is given by
\begin{equation}\label{eq:IMEX_scheme_arbitrary}
    \begin{aligned}
    u^{(i)} &= u^n + \Delta t \sum_{j=1}^{i-1} \tilde{a}_{ij} f(u^{(j)})
            + \Delta t \sum_{j=1}^{i} a_{ij} g(u^{(j)}), \quad i = 1, \ldots, s, \\
    u^{n+1} &= u^n + \Delta t \sum_{i=1}^{s} \tilde{b}_i f(u^{(i)})
            + \Delta t \sum_{i=1}^{s} b_i g(u^{(i)}),
    \end{aligned}
\end{equation}
where $f(u)$ denotes the non-stiff operator that shall be treated
explicitly, and $g(u)$ is the stiff part that is treated implicitly.
A specific IMEX-RK method is defined by the Butcher matrices $\tilde{A}=(\tilde{a}_{ij}), \; A=(a_{ij})$
and the weights $\tilde{b}=(\tilde{b}_1, \hdots, \tilde{b}_s),\; b = (b_1, \hdots, b_s)$.

For the semidiscrete system~\eqref{eq:semidiscretization_matrixform},
we treat the stiff terms scaling with $1/\epsilon^2$
implicitly and the remaining terms explicitly, such that
\begin{equation}\label{eq:imex_splitting}
    f\left((\underline{\rho}, \underline{\tg})^T\right)= \begin{pmatrix}\mathbf{0}&-\Drho \\\mathbf{0} & \frac{1}{2\epsilon}\left(D^+-D^-\right) \end{pmatrix}
    \begin{pmatrix}\underline{\rho}\\\underline{\tg}\end{pmatrix},
    \quad g\left((\underline{\rho}, \underline{\tg})^T\right)= \begin{pmatrix}\mathbf{0}& \mathbf{0} \\-\frac{1}{\epsilon^2}\Dg & -\frac{1}{\epsilon^2}\mathbf{\text{I}} \end{pmatrix}
    \begin{pmatrix}\underline{\rho}\\\underline{\tg}\end{pmatrix}.
\end{equation}
The resulting IMEX-RK-DG scheme can be solved explicitly by first
computing $\rho{(i)}$, which results in a linear equation for every
component of $\tg^{(i)}$ in every internal stage.

We recall two widely considered classes of IMEX-RK methods (see, e.g., \cite[Section~3.2]{boscarino2024}):
\begin{definition}\label{def:IMEX_RK_types}
    An IMEX-RK method is of type
    \begin{itemize}
        \item I, if $a_{ii}\neq 0, i=1\hdots s$,
        \item II, if $a_{11}=0, \; a_{ii}\neq 0, i=2, \hdots, s$,
        \item ARS, if it is of type II and additionally $b_1=a_{i1}=0, i=2, \hdots, s$.
    \end{itemize}
    An IMEX-RK method is globally stiffly accurate (GSA), if
    $$a_{si}=b_i, \quad \tilde{a}_{si}=\tilde{b}_i, \;i=1,\hdots,s,$$
    i.e., if the implicit part is stiffly accurate and the explicit part has the
    first-same-as-last (FSAL) property.
\end{definition}
GSA IMEX-RK methods have the useful property, that the value of the last internal stage equals
the final solution update, i.e., $u^{n+1} = u^{(s)}$. Also, if the implicit part of the method
is in addition \textit{A}-stable, the scheme is \textit{L}-stable, a useful property for
treating stiff equations (see \cite{boscarino2024},\cite[Chapter IV.3]{hairer1996solving} for more information).
Based on the upcoming analysis and simulations, we restrict ourselves in the
following to type I and GSA type II schemes. These schemes inherit a time step restriction
$\Delta t = {\mathcal O}(\epsilon \Delta x + \Delta x^2)$ and are AP explicit
$\epsilon\rightarrow0$
(see \cite{Jang_etal:2014,Jang_etal:2014_analysis} and
Theorem~\ref{theorem:ap_property} below).

Restricting to this subclass of IMEX schemes is required to obtain
provably asymptotic preserving schemes. Further, we also noticed, that using for example
Type II methods that are not GSA can lead to unstable results for
$\epsilon \rightarrow 0$ under the envisioned
time step restriction $\Delta t = C_1\epsilon \Delta x + C_2 \Delta x^2$.

\subsection{The asymptotic limit}
If we let
$\epsilon \rightarrow 0$ in \eqref{eq:semidiscretization_matrixform}, we obtain
the consistent DG semidiscretization
\begin{equation}\label{eq:heat_semidiscretization}
        M\underline{\rho}_t = M \Drho\Dg\underline{\rho}
\end{equation}
of the heat equation \eqref{eq:heat_equation}.
If $\Drho$ and $\Dg$ incorporate alternating upwind fluxes, this is the minimal
dissipation local DG (LDG) scheme \cite{cockburn2007minimal}. By using central fluxes,
this is the method of Bassi and Rebay (BR1) \cite{BassiRebay:97JCP}.

In \cite[Prop. 3.4]{Jang_etal:2014}, the question of asymptotic convergence using a GSA
IMEX scheme for \eqref{eq:imex_splitting} gets addressed. The limiting scheme is AP
explicit with time step restriction
$\Delta t = {\mathcal O}(\Delta x^2)$ for $\epsilon\rightarrow0$, i.e., it is given by
the explicit part of the chosen IMEX method \eqref{eq:IMEX_scheme_arbitrary} applied to
\eqref{eq:heat_semidiscretization}. We will extend this result while taking the
upcoming cut-cell stabilization into account.

\section{Semidiscrete stability and asymptotic analysis}
\label{sec:analysis}
In this section, we provide two theoretical results based on the SBP framework,
i.e., by using the operators obtained in \eqref{eq:bilinear_to_matrix_notation}.
The following results are not
restricted to the introduced DG methods and can be applied to every semidiscretization
of the type \eqref{eq:semidiscretization_matrixform}, while the semidiscrete
stability result additionally requires the involved operators to fulfill either the
periodic SBP or the periodic upwind SBP conditions. Indeed, the aim of the
upcoming Sections~\ref{sec:cut_cell_intro} and~\ref{sec:SBP} is to introduce
analogue operators $D^\z, \; D^+,\; D^-$, that inherit the DoD-DG cut-cell method
and will be constructed and proven to fulfill periodic (upwind) SBP properties.
The following results are formulated, such that they cover the DoD-DG
cases and in general a semidiscretization using periodic (upwind) SBP operators,
as the resulting properties are not new for the background DG schemes.

\subsection{Semidiscrete stability}
As introduced previously, we assume we have a pair $\bigl(\Drho, \Dg\bigr)$ of the type \eqref{eq:D1_D2_pairs}, that
discretizes the respective spatial derivatives of $\rho$ and $\tg$ in the telegraph
equation to obtain a semidiscretization of the form~\eqref{eq:semidiscretization_matrixform} with periodic (upwind) SBP properties, i.e., with
\begin{equation}\label{eq:general_skew_symmetry}
    M\Drho=-(\Dg)^T M,
\end{equation}
corresponding to \eqref{eq:_definition_of_sbp_operator} for central SBP operators
and \eqref{eq:_def_upwindSBP1} for their upwind SBP version and with the additional dissipative property \eqref{eq:_def_upwindSBP2} for the chosen basic SBP operators $D^-$ and $D^+$ in \eqref{eq:D1_D2_pairs} and \eqref{eq:semidiscretization_matrixform}.
This leads to the following result:
\begin{theorem}\label{theorem:semidiscr_stability}
Consider the semidiscretization \eqref{eq:semidiscretization_matrixform}
of the telegraph equation using a pair of periodic \mbox{(upwind) SBP} operators
$\bigl(\Drho, \Dg\bigr)$ of the type \eqref{eq:D1_D2_pairs}.
Then, this semidiscrete system is stable with respect to
the weighted inner-product $\left(u,v\right)_{\underline{M}}$, i.e., the semidiscrete
solution $\left(\rho, \tg\right)^T$ satisfies
\begin{equation*}
   \partial_t \left\|\rhoj\right\|_{\underline{M}} \le 0,
\end{equation*}
where $\underline{M} = \operatorname{diag}(M, \epsilon^2 M)$.
\end{theorem}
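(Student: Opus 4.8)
The plan is to perform a discrete energy estimate. First, I would compute $\partial_t \| (\rho,\tg)^T \|_{\underline{M}}^2$. By definition of the weighted inner product, this quantity equals $\frac{d}{dt}\bigl( \underline{\rho}^T M \underline{\rho} + \epsilon^2 \underline{\tg}^T M \underline{\tg}\bigr)$, which, using symmetry of $M$, is $2\underline{\rho}^T M \underline{\rho}_t + 2\epsilon^2 \underline{\tg}^T M \underline{\tg}_t$. I would then substitute the right-hand sides of the semidiscrete system \eqref{eq:semidiscretization_matrixform}: $M\underline{\rho}_t = -M\Drho\underline{\tg}$ and $M\underline{\tg}_t = -\frac{1}{\epsilon^2} M\Dg\underline{\rho} - \frac{1}{2\epsilon} M(D^- - D^+)\underline{\tg} - \frac{1}{\epsilon^2} M\underline{\tg}$. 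This yields
\begin{equation*}
\tfrac12 \partial_t \left\| \rhoj \right\|_{\underline{M}}^2 = -\underline{\rho}^T M \Drho \underline{\tg} - \underline{\tg}^T M \Dg \underline{\rho} - \tfrac{1}{2\epsilon}\,\epsilon^2\,\underline{\tg}^T M(D^- - D^+)\underline{\tg} - \underline{\tg}^T M \underline{\tg}.
\end{equation*}

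The key step is to show that the first two terms cancel. Using the skew-symmetry relation \eqref{eq:general_skew_symmetry}, namely $M\Drho = -(\Dg)^T M$, I would rewrite the first term as $-\underline{\rho}^T M \Drho \underline{\tg} = \underline{\rho}^T (\Dg)^T M \underline{\tg} = (\Dg \underline{\rho})^T M \underline{\tg} = \underline{\tg}^T M \Dg \underline{\rho}$ (the last equality by symmetry of $M$). Hence the first two terms sum to zero, independently of whether the pair $(\Drho,\Dg)$ is $(D^\z, D^\z)$, $(D^+, D^-)$, or $(D^-, D^+)$. What remains is
\begin{equation*}
\tfrac12 \partial_t \left\| \rhoj \right\|_{\underline{M}}^2 = -\tfrac{\epsilon}{2}\,\underline{\tg}^T M(D^- - D^+)\underline{\tg} - \underline{\tg}^T M \underline{\tg}.
\end{equation*}
The second term $-\underline{\tg}^T M \underline{\tg} \le 0$ since $M$ is symmetric positive definite. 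For the first term, the upwind SBP dissipation property \eqref{eq:_def_upwindSBP2} states that $M(D^+ - D^-)$ is negative semidefinite, so $M(D^- - D^+)$ is positive semidefinite, giving $-\tfrac{\epsilon}{2}\underline{\tg}^T M(D^- - D^+)\underline{\tg} \le 0$ for $\epsilon > 0$. In the central-flux case $D^+ = D^- = D^\z$, this term simply vanishes. Either way, $\partial_t \| (\rho,\tg)^T \|_{\underline{M}}^2 \le 0$, and since $\partial_t \|\cdot\|_{\underline{M}} = \frac{1}{2\|\cdot\|_{\underline{M}}} \partial_t \|\cdot\|_{\underline{M}}^2$ where it is defined (and the bound is trivial where the norm vanishes), the claimed inequality $\partial_t \| (\rho,\tg)^T \|_{\underline{M}} \le 0$ follows.

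I do not anticipate a serious obstacle here: the argument is a textbook SBP energy estimate, and the crucial algebraic cancellation is exactly what the hypothesis \eqref{eq:general_skew_symmetry} is designed to provide. The only points requiring a little care are bookkeeping of the $\epsilon$ powers coming from the weight $\underline{M} = \operatorname{diag}(M,\epsilon^2 M)$ (which is what makes the stiff $\frac{1}{\epsilon^2}$ coupling terms combine cleanly), and noting that the dissipation relation \eqref{eq:_def_upwindSBP2} is only needed for the genuinely upwind pairs while it is vacuous for the central pair — so the single statement covers all three cases in \eqref{eq:D1_D2_pairs} simultaneously.
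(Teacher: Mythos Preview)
Your proof is correct and follows essentially the same approach as the paper: compute $\tfrac12\partial_t\|(\rho,\tg)^T\|_{\underline{M}}^2$, use the skew-symmetry relation \eqref{eq:general_skew_symmetry} to cancel the coupling terms, and bound the remaining two terms via the upwind dissipation property \eqref{eq:_def_upwindSBP2} and the positive definiteness of $M$. The only cosmetic difference is that the paper writes the energy identity in block-matrix form while you expand it componentwise; your extra remark on passing from $\|\cdot\|_{\underline{M}}^2$ to $\|\cdot\|_{\underline{M}}$ is a small bit of additional care not spelled out in the paper.
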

\begin{proof}
    We consider the semidiscretization in the matrix-vector form and perform a
    straightforward energy estimate
    \begin{align*}
    \frac{1}{2}\partial_t \left\|\rhoj\right\|^2_{\underline{M}}&= \rhoj^T\underline{M}\partial_t\rhoj \\
    &= \rhoj^T
    \begin{pmatrix} \mathbf{0}&-M\Drho \\
    -M\Dg&\mathbf{0} \\ \end{pmatrix}
    \rhoj + \frac{\epsilon}{2}\rhoj^T
    \begin{pmatrix} \mathbf{0}&\mathbf{0} \\
     \mathbf{0}& M(D^+-D^-)\\ \end{pmatrix}\rhoj -\rhoj^T
    \begin{pmatrix}0 & 0\\0 & M \end{pmatrix}\rhoj \\
    &=\rhoj^T
    \begin{pmatrix} \mathbf{0}&-M\Drho \\
    -M\Dg&\mathbf{0} \\ \end{pmatrix}
    \rhoj + \frac{\epsilon}{2} \tg^TM\left(D^+-D^-\right)\tg -\tg^TM\tg \\
    &\le
    \rhoj^T\begin{pmatrix} \mathbf{0}&-M\Drho \\
    -M\Dg&\mathbf{0} \\ \end{pmatrix}\rhoj
    \end{align*}
    by using the property \eqref{eq:_def_upwindSBP2} of the periodic upwind SBP operators for the
    last inequality. Using
    \eqref{eq:general_skew_symmetry}, the right-hand side term can then be rewritten as
    \begin{equation}\label{eq:skew_symmetry_fluxsystem}
            \rhoj^T\begin{pmatrix} \mathbf{0}&-M\Drho \\
    -M\Dg&\mathbf{0} \\ \end{pmatrix} \rhoj
    =
       \rhoj^T \begin{pmatrix} \mathbf{0}&{\Dg}^TM \\
    -M\Dg&\mathbf{0} \\ \end{pmatrix}\rhoj = 0,
    \end{equation}
    because of the skew-symmetry of the involved block matrix.
\end{proof}
\subsection{Formal asymptotic limit}
In this section, we perform a formal asymptotic analysis for semidiscretizations
of the form \eqref{eq:semidiscretization_matrixform}, to which we apply the
aforementioned IMEX schemes.  We want to emphasize in advance, that the used
techniques are standard, and the main takeaway is that by applying
the upcoming DoD stabilization,
the resulting change of the semidiscretization
does not impact the questions of asymptotic preservation at all, as long as
the time discretization is chosen carefully. For this purpose, we consider again a pair of upwind SBP operators~\eqref{eq:D1_D2_pairs}
to obtain the semidiscretization \eqref{eq:semidiscretization_matrixform}. In the limit $\epsilon\rightarrow 0$, we should obtain a consistent
full discretization of the limit equation (the linear heat equation), i.e., a reasonable time discretization of the semidiscrete scheme \eqref{eq:heat_semidiscretization}.

Since for asymptotic preservation, the crucial part of the scheme is the choice of IMEX method and splitting, we select the same option as proposed in \cite{Jang_etal:2014}
for the background DG method and extend this to an arbitrary pair $\bigl(\Drho, \Dg\bigr)$ with (upwind) SBP property.
\begin{theorem}\label{theorem:ap_property}
Consider an IMEX scheme
\begin{itemize}
    \item of type I
    \item or of type II with GSA property and well-prepared initial data,
          i.e., $\tg^0 = -\Dg \rho^0$,
\end{itemize}
applied to the splitting \eqref{eq:imex_splitting}. Then, in the limit $\epsilon\rightarrow 0$, this scheme reduces to the
explicit part of the IMEX scheme applied to~\eqref{eq:heat_semidiscretization}.
\end{theorem}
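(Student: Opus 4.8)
The plan is to write out the internal stages of~\eqref{eq:IMEX_scheme_arbitrary} for the splitting~\eqref{eq:imex_splitting} and carry out a formal asymptotic expansion in $\epsilon$. Since the implicit operator $g$ acts trivially on the $\rho$-component, the $\rho$-stages read $\rho^{(i)}=\rho^n-\dt\sum_{j=1}^{i-1}\tilde a_{ij}\Drho\tg^{(j)}$; in particular each $\rho^{(i)}$ depends only on $\tg^{(1)},\dots,\tg^{(i-1)}$, and $\rho^{(1)}=\rho^n$. Multiplying the $\tg$-stage by $\epsilon^2$ clears all negative powers of $\epsilon$ and yields
\[
\epsilon^2\tg^{(i)}=\epsilon^2\tg^n+\dt\sum_{j=1}^{i-1}\tilde a_{ij}\frac{\epsilon}{2}\left(D^{+}-D^{-}\right)\tg^{(j)}-\dt\sum_{j=1}^{i}a_{ij}\bigl(\Dg\rho^{(j)}+\tg^{(j)}\bigr).
\]
Assuming the stage values and the numerical solution at each time level admit limits $\rho^{(i)}\to\rho^{(i),0}$, $\tg^{(i)}\to\tg^{(i),0}$, $\rho^{n}\to\rho^{n,0}$, $\tg^{n}\to\tg^{n,0}$ as $\epsilon\to0$ (legitimate here because the scheme is linear and the implicit solve at stage $i$ with $a_{ii}\neq0$ reduces to multiplication by the scalar $(1+\dt\,a_{ii}/\epsilon^2)^{-1}$, which tends to $0$ as $\epsilon\to0$), the leading order of the displayed identity is the constraint
\[
\sum_{j=1}^{i}a_{ij}\bigl(\Dg\rho^{(j),0}+\tg^{(j),0}\bigr)=0,\qquad i=1,\dots,s.
\]

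For a type I scheme I then argue by induction on $i$ that every stage is driven onto the local equilibrium $\tg^{(i),0}=-\Dg\rho^{(i),0}$: for $i=1$ the constraint reads $a_{11}\bigl(\Dg\rho^{(1),0}+\tg^{(1),0}\bigr)=0$ and, since $\rho^{(1),0}=\rho^{n,0}$ and $a_{11}\neq0$, forces $\tg^{(1),0}=-\Dg\rho^{n,0}$; in the induction step all terms with $j<i$ vanish by the hypothesis, leaving $a_{ii}\bigl(\Dg\rho^{(i),0}+\tg^{(i),0}\bigr)=0$ with $a_{ii}\neq0$. Note that no assumption on $\tg^n$ enters here, because the first stage already projects onto the equilibrium manifold. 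For a GSA type II scheme, $a_{11}=0$ makes the first stage explicit, so $\tg^{(1)}=\tg^n$ and $\rho^{(1)}=\rho^n$; here I run a second, outer induction over the time level $n$ with hypothesis $\tg^{n,0}=-\Dg\rho^{n,0}$, whose base case $n=0$ is precisely the assumption of well-prepared data. Granting the hypothesis, $\Dg\rho^{(1),0}+\tg^{(1),0}=\Dg\rho^{n,0}+\tg^{n,0}=0$, and the same induction on $i$ (now using $a_{ii}\neq0$ for $i=2,\dots,s$) gives $\tg^{(i),0}=-\Dg\rho^{(i),0}$ for all $i$, in particular for $i=s$. By the GSA property $u^{n+1}=u^{(s)}$, hence $\tg^{n+1,0}=\tg^{(s),0}=-\Dg\rho^{(s),0}=-\Dg\rho^{n+1,0}$, which closes the outer induction and shows all stages are at equilibrium at every time level.

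In either case $\tg^{(j),0}=-\Dg\rho^{(j),0}$ for every $j$ (including $j=1$). Substituting this into the $\rho$-stages and the $\rho$-update, and recalling that $g$ leaves the $\rho$-component untouched while the $(D^{+}-D^{-})/(2\epsilon)$ term never enters the $\rho$-equation, gives
\[
\rho^{(i),0}=\rho^{n,0}+\dt\sum_{j=1}^{i-1}\tilde a_{ij}\Drho\Dg\rho^{(j),0},\qquad \rho^{n+1,0}=\rho^{n,0}+\dt\sum_{i=1}^{s}\tilde b_i\Drho\Dg\rho^{(i),0},
\]
which is exactly the explicit Runge-Kutta method with tableau $(\tilde A,\tilde b)$ applied to the heat semidiscretization~\eqref{eq:heat_semidiscretization}, with the $\tg$-stages slaved to $\rho$ through $\tg^{(i),0}=-\Dg\rho^{(i),0}$. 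The one genuinely delicate step is the limit passage itself, \ie the existence of the leading-order stage values; this is why the statement is phrased as a \emph{formal} asymptotic analysis. A fully rigorous version would require a uniform-in-$\epsilon$ bound on the stages together with a matched expansion $\tg^{(i)}=\tg^{(i),0}+\epsilon\,\tg^{(i),1}+\dots$, but the algebraic substance, namely the successive projection onto the equilibrium $\tg=-\Dg\rho$ forced by the stiff term, is the computation carried out above.
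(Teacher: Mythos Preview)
Your proof is correct and follows essentially the same approach as the paper: write out the IMEX stages, pass to the $\epsilon\to0$ limit in the $\tg$-equation to obtain the constraint $\sum_{j=1}^{i}a_{ij}\bigl(\Dg\rho^{(j)}+\tg^{(j)}\bigr)=0$, use induction on the stages (together with $a_{ii}\neq0$) to force $\tg^{(i)}=-\Dg\rho^{(i)}$, and for type~II schemes close the argument by an outer induction on $n$ via well-prepared data and the GSA property. Your presentation is in fact somewhat more explicit than the paper's about the double induction structure and about the formal nature of the limit passage.
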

\begin{proof}
The internal stages $k=1, \hdots, s$ of the IMEX-RK scheme applied to the telegraph equation take the form
\begin{equation}\label{eq:imex_telegraph_scheme}
    \begin{aligned}
    \rho^{(k)}&=\rho^n-\Delta t \sum\limits_{i=1}^{k-1}\widetilde{a}_{ki}\Drho \tg^{(i)},\\
    \tg^{(k)}&=\tg^n+\frac{\Delta t}{2\epsilon}\sum\limits_{i=1}^{k-1}\widetilde{a}_{ki}\left(D^+-D^-\right)\tg^{(i)}
                -\frac{\Delta t}{\epsilon^2}\sum\limits_{i=1}^{k}a_{ki}\left(\Dg\rho^{(i)}+\tg^{(i)}\right).
    \end{aligned}
\end{equation}
Taking the limit $\epsilon \rightarrow 0$ solely influences the second equation and results in the restriction
\begin{equation}\label{eq:APrestriction}
    \sum\limits_{i=1}^{k}a_{ki}\left(\Dg\rho^{(i)}+\tg^{(i)}\right)=0, \quad k=1, \hdots, s.
\end{equation}
For now, we assume that
\begin{equation}\label{eq:first_internal_stage_limit}
    \tg^{(1)}=-\Dg\rho^{(1)}=-\Dg\rho^n
\end{equation}
holds and prove it later.
Then, since $a_{kk}\not=0$ for $k>1$, iteration of \eqref{eq:APrestriction} yields
$$\tg^{(k)}=-\Dg\rho^{(k)} \quad k = 1\hdots, s.$$
Now, we can insert the internal stages of $\tg$ into the internal stages and final update of $\rho$ to obtain
\begin{align*}
\rho^{(k)}&=\rho^n+\Delta t \sum\limits_{i=1}^{k-1}\widetilde{a}_{ki}\Drho\Dg\rho^{(i)},\\
\rho^{n+1}&=\rho^n+\Delta t \sum\limits_{i=1}^{s}\widetilde{b}_{i}\Drho\Dg\rho^{(i)},
\end{align*}
which is exactly the explicit treatment of the system of ODEs~\eqref{eq:heat_semidiscretization}.

It remains to show, that \eqref{eq:first_internal_stage_limit} holds true. For type I schemes, this is
immediately fulfilled because with $a_{11}\not=0$, equation \eqref{eq:APrestriction} directly yields
$\tg^{(1)}=-\Dg\rho^{(1)}=-\Dg\rho^n$.
As for type II schemes $a_{11}=0$, we have $\tg^{(1)}=\tg^n$ and need to consider the well-prepared initial
data for the second component, given by $\tg^{0}= -\Dg\rho^{0}$. This is the equivalence
of~\eqref{eq:first_internal_stage_limit} for the first time step. Because of the GSA property
(see Definition~\ref{def:IMEX_RK_types}), we know that $\tg^{n+1}=\tg^{(s)}=-\Dg\rho^{(s)}=-\Dg\rho^{n+1}$
holds for every $n$ and therefore~\eqref{eq:first_internal_stage_limit} is true for every $n$.
\end{proof}

\section{The cut-cell problem and the DoD stabilization}
\label{sec:cut_cell_intro}
Next, we introduce the cut-cell problem in one spatial dimension and
present the DoD stabilization for linear transport as a starting
point to extend it to the linear kinetic model.
\subsection{The cut-cell problem}\label{subsec:cut_cell_problem}
We include a cut at an arbitrary position inside a uniform partition of $\Omega_x$, such
that we can write the vertices as
$x_\text{min}=x_{\frac12} < x_{\frac32} < \hdots < x_{c-\frac12} < x_{c+\frac12} < \hdots < x_{N-\frac12} < x_{N+\frac12} = x_\text{max}$
with the cell sizes
\begin{align*}
    \abs{E_i} & = \Delta x, \quad i \in \{1,\hdots, N\} \setminus \{c-1, c, c+1\}, \quad
    \abs{E_c} = \alpha \Delta x.
\end{align*}
Hereby, $\alpha\le1/2$ to emphasize that $E_c$ is the small cut cell.
Depending on the position of the cut, $E_{c-1}$ is of size $(1-\alpha)\Delta x$
and $E_{c+1}$ is of size $\Delta x$ or vice-versa. The latter case is displayed in Figure~~\ref{fig:cut_cell_setting}.
\begin{figure}[htbp]
    \centering
   \begin{tikzpicture}
        \draw[gray, thick] (0,0) -- (12.5,0);
        \draw[gray, thick, dotted]  (0,0) -- (-0.5, 0);
        \filldraw[black] (1.25,-0.25)  node[anchor=north]{$\Delta x$};
        \filldraw[black] (1.25,0.0)  node[anchor=south]{$E_{(c-2)}$};

        \draw[gray, thick]  (2.5,1/4) -- (2.5, -1/4);
        \draw[gray, thick, dotted]  (2.5,0) -- (2.5, -4/5);
        \filldraw[black] (2.5,-4/5)  node[anchor=north]{$x_{c-\frac32}$};
        \filldraw[black] (3.75,-0.25)  node[anchor=north]{$\Delta x$};
        \filldraw[black] (3.75,0.0)  node[anchor=south]{$E_{(c-1)}$};

        \draw[gray, thick]  (5,1/4) -- (5, -1/4);
        \draw[gray, thick, dotted]  (5,0) -- (5, -4/5);
        \filldraw[black] (5,-4/5)  node[anchor=north]{$x_{c-\frac12}$};
        \filldraw[black ] (5.5,-0.25)  node[anchor=north]{$\alpha \Delta x$};
        \filldraw[black] (5.5,0.0)  node[anchor=south]{$E_{(c)}$};

        \draw[gray, thick]  (6,1/4) -- (6, -1/4);
        \draw[gray, thick, dotted]  (6,0) -- (6, -4/5);
        \filldraw[black] (6,-4/5)  node[anchor=north]{$x_{c+\frac12}$};
        \filldraw[black] (6.75,-0.25)  node[anchor=north]{$(1-\alpha)\Delta x$};
        \filldraw[black] (6.65,0.0)  node[anchor=south]{$E_{(c+1)}$};

        \draw[gray, thick]  (7.5,1/4) -- (7.5, -1/4);
        \draw[gray, thick, dotted]  (7.5,0) -- (7.5, -4/5);
        \filldraw[black] (7.5,-4/5)  node[anchor=north]{$x_{c+\frac32}$};
        \filldraw[black] (8.75,-0.25)  node[anchor=north]{$\Delta x$};
        \filldraw[black] (8.75,0.00)  node[anchor=south]{$E_{(c+2)}$};

        \draw[gray, thick]  (10,1/4) -- (10, -1/4);
        \draw[gray, thick, dotted]  (10,0) -- (10, -4/5);
        \filldraw[black] (10,-4/5)  node[anchor=north]{$x_{c+\frac52}$};
        \filldraw[black] (11.25,-0.25)  node[anchor=north]{$\Delta x$};
        \filldraw[black] (11.25,0.00)  node[anchor=south]{$E_{(c+3)}$};
        \draw[gray, thick, dotted]  (12.5,0) -- (13, 0);
        \end{tikzpicture}
        \caption{One-dimensional cut-cell setting.}
        \label{fig:cut_cell_setting}
\end{figure}
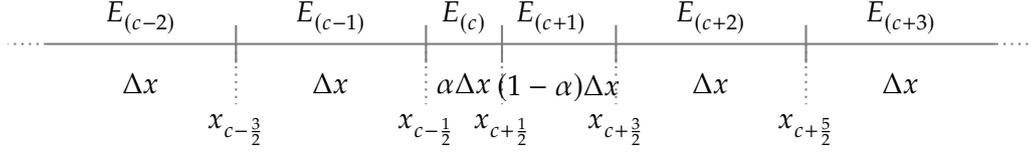
This larger cut cell is not in need of stabilization, as its size is bounded by $\Delta x/2$.

An arbitrarily small cell imposes several challenges for transport terms. In one spatial dimension, this reduces
to a mass-overload inside the small cell and an incorrect domain of dependence of the outflow cell, if the CFL condition
is chosen based on the background mesh size $\Delta x$. We want to avoid a standard time step restriction
dictated by the small cell $\alpha \Delta x$, as this leads to unfeasible small time steps
$\Delta t \le C \alpha\Delta x$. Therefore, we require a stabilization mechanism.

\subsection{The domain-of-dependence stabilization}
To stabilize the small cut cells, we add a penalty term at the semidiscrete level,
that redistributes the masses between the small cut cell and its neighbors. Further,
this resolves the disrupted domain of dependence of the neighboring cells.
Exemplarily, we display the current version applied for the linear transport
equation, following \cite{may2022dod}. Consider
\begin{equation}\label{eq:linear_tansport}
    \partial_tu + \vel\partial_x u = 0,
\end{equation}
which we semidiscretize by the (upwind) SBP operators previously considered for the transport terms in the telegraph equation: Find
$u_h(t) \in \mathcal{V}_h^p(\Omega_x)$, such that
\begin{equation*}
(\partial_t u_h(t), w_h)_{L^2(\Omega_x)}+\vel a_h^{\{+, -, \z\}}(u_h(t), w_h)=0\quad \forall w_h \in \mathcal{V}_h^p(\Omega_x),
\end{equation*}
where $a_h^{\{+, -, \z\}}$ is again defined by~\eqref{eq:a_h}. Before we introduce the
stabilization term, we need an operator that extends the polynomial from a cell
onto neighboring cells.
\begin{definition}[Extension operator]
        Recall the notation of Section~\ref{subsec:semidiscr_background} and
        consider $u_h\in \mathcal{V}_h^p(\Omega_x)$ and a cell $E_i\subset\Omega_x$.
        We extend the domain of $u_i$ from $E_i$ to $\Omega_x$ by
        $$u_i\in \mathcal{P}^p(\Omega_x)\; \text{with} \; u_i|_{E_i} = u_h|_{E_i}.$$
        Note that this is just the evaluation of the uniquely defined polynomial $u_h|_{E_i}$ outside of $E_i$.
\end{definition}

In the following, we will introduce the DoD stabilization by the two operators
$J_h^{0, c, \delta}, J_h^{1, c, \delta}$, for $\delta\in\{+,-,z\}$, to treat the volume and flux terms of \eqref{eq:a_h} separately.
To correct the mass-flow into cell
$E_c$, we introduce the stabilization term
\begin{equation}\label{eq:def_J0_bilinar}
\begin{aligned}
    J^{0, c, \delta}_h(u_h, w) &= \eta_c\left(\mathcal{H}^{\delta}(u_{c-1}, u_{c+1})(x_{c-\frac12})-\mathcal{H}^{\delta}(u_{c-1}, u_c)(x_{c-\frac12})\right)\llbracket w\rrbracket_{c-\frac12} \\
                    &+ \;\eta_c\left(\mathcal{H}^{\delta}(u_{c-1}, u_{c+1})(x_{c+\frac12})-\mathcal{H}^{\delta}(u_{c}, u_{c+1})(x_{c+\frac12})\right)\llbracket w\rrbracket_{c+\frac12},
\end{aligned}
\end{equation}
which replaces a fraction $\eta_c \in [0, 1]$ of the already existing flux by a new flux,
possibly containing an extended solution from another cell.
The following second stabilization term accounts for the mass distribution inside
the volumes, both of the cut cell and its neighbors. It is given by
\begin{equation}\label{eq:def_J1_bilinar}
    \begin{aligned}
    J_{h}^{1,c, {\delta}}(u^h, w)
    &= \eta_{c} \sum_{j \in \mathcal{I}_c} K(j)
    \int_{E_c} \left( \mathcal{H}^{\delta}(u_{c-1}, u_{c+1})
    - f(u_j) \right)
    \cdot \partial_x w_j \, \mathrm{d}x \\
    &\quad + \eta_{c} \sum_{j \in \mathcal{I}_c} K(j)
    \int_{E_c} \left( \mathcal{H}^{\delta}_a(u_{c-1}, u_{c+1})\, u_j \right)
    \cdot \partial_x w_{c-1} \, \mathrm{d}x \\
    &\quad + \eta_{c} \sum_{j \in \mathcal{I}_c} K(j)
    \int_{E_c} \left( \mathcal{H}^{\delta}_b(u_{c-1}, u_{c+1})\, u_j \right)
    \cdot \partial_x w_{c+1} \, \mathrm{d}x,
    \end{aligned}
\end{equation}
where $\mathcal{I}_c=\{c-1, c, c+1\}$ and $\mathcal{H}^{\delta}_a(a, b),\ \mathcal{H}^{\delta}_b(a, b)$ are the
partial derivatives of the respective first and second component of $\mathcal{H}^\delta$.
Further, we align with the notation of \cite{may2022dod} and write
$K(c-1)=L_c$, $K(c)=-1,\ K(c+1)=R_c$ with $L_c+R_c = 1$, with the intention to specify $L_c$ and $R_c$
according to the flow directions.

The full stabilized semidiscrete system for \eqref{eq:linear_tansport} is then given by
\begin{equation}\label{eq:dod_semidiscretization}
    \left( \partial_t u_h(t), w_h\right)_{L^2(\Omega)}
    + \vel a_h^\delta\left( u_h(t), w_h \right)
    + \vel J^{\delta}_h\left( u_h(t), w_h \right) = 0, \quad \forall w_h \in \mathcal{V}_h^p,
\end{equation}
where $J^{\delta}_h\left( u_h(t), w_h \right) = J^{0, c, {\delta}}_h\left( u_h(t), w_h \right) + J^{1, c, {\delta}}_h\left( u_h(t), w_h \right)$.

\begin{remark}
        For simplicity, stabilization was introduced for a single cut cell.
        Multiple cut cells can be treated analogously by adding stabilization
        for the respective cells.
        Let $\mathcal{S}=\{E_j : |E_j|\le \Delta x/2\}$ be the set of all small cells
        requiring stabilization. Then $J_h^{\delta}$ in \eqref{eq:dod_semidiscretization}
        is given by
    $$ J_h^{0,\delta}\left( u_h, w_h \right) = \sum\limits_{j \in \mathcal{S}}J_h^{0,j, {\delta}}\left( u_h, w_h \right),
    \quad  J_h^{1, {\delta}}\left( u_h, w_h \right) = \sum\limits_{j \in \mathcal{S}}J_h^{1,j, {\delta}}\left( u_h, w_h \right).$$
\end{remark}

\subsection{DoD stabilization for the telegraph equation}
To stabilize the semidiscrete system \eqref{eq:semidiscretization_bilinearform} in case of small cut cells,
our first approach is to stabilize all transport terms in an appropriate manner according to the respective
numerical fluxes used within the background DG scheme by the previously described DoD stabilization mechanism. This results in
\begin{equation}\label{eq:dod_semidiscr_bilinearform}
    \begin{aligned}
        (\partial_t\rho_h(t), \phi_h)_{L^2(\Omega_x)}&+a_h^{\{-, +, \z\}}(\tg_h(t), \phi_h)+J^{\{-, +, \z\}}_h\left(\tg_h(t), \phi_h \right)=0,
        \quad &\forall \phi_h \in\mathcal{V}_h^p(\Omega_x), \\
        (\partial_t\tg_h(t), \psi_h)_{L^2(\Omega_x)}&+\frac{1}{\epsilon^2}\left(a_h^{\{+, -, \z\}}(\rho_h(t), \psi_h)+J^{\{+, -, \z\}}_h\left(\rho_h(t), \psi_h \right)\right)\\
        &+ \frac{1}{\epsilon}\left(b_h(\tg_h(t), \psi_h) +\frac{1}{2}\left(J^{-}_h\left(\tg_h(t), \psi_h \right)-J^{+}_h\left(\tg_h(t), \psi_h \right)\right)\right)\\
        &= -\frac{1}{\epsilon^2}(\tg_h(t), \psi_h)_{L^2(\Omega_x)}, \quad &\forall \psi_h \in\mathcal{V}_h^p(\Omega_x).
    \end{aligned}
\end{equation}
As described in Section~\ref{subsec:semidiscr_matrix_SBP}, we can also write
this stabilized semidiscretization in the matrix-vector form \eqref{eq:D1_D2_pairs}, \eqref{eq:semidiscretization_matrixform}
by the adapted choice
\begin{equation}\label{eq:bilinear_to_matrix_notation_dod}
a_h^{\{+,-,\z\}}(u_h, w_h) + J_h^{\{+,-,\z\}}(u_h, w_h)=\underline{w}^TMD^{\{+,-,\z\}}\underline{u}.
\end{equation}
We want to emphasize that the above preliminary DoD scheme~\eqref{eq:dod_semidiscr_bilinearform} for the
telegraph equation is
meant as a first, natural extension based on the transport term. In the
following Section~\ref{sec:SBP}, we will analyze the method further
and recognize that we require a slight adjustment in order to obtain periodic
(upwind) SBP properties and therefore semidiscrete stability.
In particular, this means that the method~\eqref{eq:dod_semidiscr_bilinearform} is not the final DoD-stabilization in the case of
alternating numerical upwind fluxes.
However, it is sufficient and in its final form in the case of central numerical fluxes because in that case,
the DoD modification directly inherits the classical SBP property of the background DG scheme.

\begin{remark}
We did not specify the parameters $\eta_c, \; L_c,\; R_c$ yet. While the choice
of $\eta_c\in [0,1]$ seems to be arbitrary concerning semidiscrete properties,
$L_c$ and $R_c$ will matter and therefore be discussed in the next section.
\end{remark}

\begin{remark}
By choosing the IMEX splitting as in \eqref{eq:imex_splitting},
one may wonder whether it is necessary to stabilize the transport
term in the second equation of \eqref{eq:dod_semidiscr_bilinearform},
(i.e., inherited from $\Dg$), as this term is treated implicitly anyway.
But by looking closer at the fully discrete scheme, one recognizes that although
the operator $\Dg$ acts on the updated value of $\tg$ (of the full time step or
inside an internal stage), the information inside $\Dg$ still gets propagated
explicitly by the method, which leads to instabilities by choosing $\Delta t$
independent of the small cells. This also highlights the major difference to
the stiffness that originates from small $\epsilon^2$, which gets covered by
the implicit treatment of the source term $-\tg/\epsilon^2$ in the second equation.
\end{remark}

\section{SBP properties of the DoD stabilization}
\label{sec:SBP}
In the following, the goal is to retain the SBP properties when
the DoD-stabilization is applied to the DG method with an underlying cut-cell mesh
as introduced in Section~\ref{subsec:cut_cell_problem}. Without loss of generality,
this result extends to multiple small cells that need stabilization.
In particular, we consider the DoD stabilized scheme \eqref{eq:dod_semidiscretization}
for the linear advection equation either in variational formulation using bilinear
forms or in matrix-vector notation \eqref{eq:bilinear_to_matrix_notation_dod}.
Similar to subsection~\ref{subsec:semidiscr_matrix_SBP}, we obtain operators,
that will be
analyzed and adapted with respect to periodic (upwind) SBP properties.
We first provide an example using upwind fluxes for the first order scheme in
detail and show its upwind-SBP property in terms of the matrix-vector notation.
Then, we continue with
the case of central numerical fluxes and higher order upwind variants
in the more compact notation of bilinear forms.

\subsection{SBP properties of DoD with upwind numerical fluxes for the finite volume case}\label{subsec:SBP_dod_p=0}
In the following, we show that for the finite volume ($p=0$) case, the DoD
stabilized semidiscrete scheme given by \eqref{eq:def_J0_bilinar} retains the
periodic SBP properties of the unstabilized formulation.
More precisely, on the full grid, we obtain a dual-pair of periodic upwind SBP
operators $D^-,D^+$.

For this purpose, the DoD stabilized scheme is applied to the linear advection
equation with advection speed $\vel=\pm1$ on a periodic grid.

For $\vel=1$, i.e. $u_t+u_x=0$, using upwind numerical fluxes $\mathcal{H}^-$
in \eqref{eq:def_J0_bilinar} results in the semidiscrete scheme
\begin{align*}
 \frac{d}{dt}u_{c-1} + \frac{1}{\Delta x}\left(u_{c-1}-u_{c-2}\right) &= 0\\
  \frac{d}{dt}u_{c} + \frac{1-\eta_c}{\alpha \Delta x}\left(u_{c}-u_{c-1}\right) &= 0\\
  \frac{d}{dt}u_{c+1} + \frac{1}{(1-\alpha) \Delta x}\left(u_{c+1}-u_{c}\right)-\frac{\eta_c}{(1-\alpha) \Delta x}\left(u_{c-1}-u_{c}\right) &= 0\\
  \frac{d}{dt}u_{c+2} + \frac{1}{\Delta x} \left(u_{c+2}-u_{c+1}\right) &= 0.
 \end{align*}
In matrix-vector notation, this yields $\frac{d}{dt}\underline u + D^-\underline u = \underline 0$ with
\[  D^-=\frac{1}{\Delta x}\begin{pmatrix}
\ddots & & & & \\-1 & 1 & & &\\ & \frac{\eta_c-1}{\alpha} & \frac{1-\eta_c}{\alpha} & &\\& -\frac{\eta_c}{1-\alpha} & \frac{\eta_c-1}{1-\alpha} & \frac{1}{1-\alpha} &\\
& & & -1 & 1 &\\ & & & & & \ddots
\end{pmatrix}\]

On the other hand, for $\vel=-1$, i.e. $u_t-u_x=0$, downwinding using numerical fluxes
$\mathcal{H}^+$ results in
\begin{align*}
 \frac{d}{dt}u_{c-2} - \frac{1}{\Delta x}\left(u_{c-1}-u_{c-2}\right) &= 0\\
  \frac{d}{dt}u_{c-1} -\frac{1}{\Delta x} \left(u_{c}-u_{c-1}\right) - \frac{\eta_c}{\Delta x}\left(u_{c+1}-u_{c}\right)&= 0\\
\frac{d}{dt}u_{c} -\frac{1-\eta_c}{\alpha \Delta x}\left(u_{c+1}-u_{c}\right)&= 0\\
  \frac{d}{dt}u_{c+1} - \frac{1}{(1-\alpha) \Delta x}\left(u_{c+2}-u_{c+1}\right) &= 0
 \end{align*}
yielding $\frac{d}{dt}\underline u - D^+\underline u = \underline 0$ with
\[  D^+=\frac{1}{\Delta x}\begin{pmatrix}
\ddots &  & & & &\\ & -1 & 1 & & & \\ & & -1 & 1-\eta_c & \eta_c &\\ & & & \frac{\eta_c-1}{\alpha} & \frac{1-\eta_c}{\alpha} &\\[.5mm]
& & & & -\frac{1}{1-\alpha} & \frac{1}{1-\alpha} &\\ & & & & & -1 & 1\\ & & & & & & \ddots
\end{pmatrix}.\]
\begin{lemma}
Choosing the norm matrix $M=\mbox{diag}\left(\ldots,\Delta x,\alpha \Delta x,(1-\alpha)\Delta x, \Delta x,\ldots\right)$,
the first-order DoD-DG operators are periodic upwind SBP operators, i.e.,
$MD^-=-(D^+)^TM$ and $M(D^+-D^-)$ is negative semidefinite.
\begin{proof}
We have
\begin{align*} & MD^-+(D^+)^TM\\
&= \begin{pmatrix}
\ddots & & & &\\-1 & 1 & & &\\ & -1 & 1 & & & \\& & \eta_c-1 & 1-\eta_c & &\\& & -\eta_c & \eta_c-1 & 1 &\\
& & & & -1 & 1 &\\ & & & & & & \ddots
\end{pmatrix}+\begin{pmatrix}
\ddots &  & & & &\\ & -1 & & & & \\ & 1 & -1 & & &\\ & &  1-\eta_c & \eta_c-1 &  &\\[.5mm]
& & \eta_c & 1-\eta_c & -1 & &\\ & & & & 1 & -1 &\\ & & & & & 1 & \ddots
\end{pmatrix} = 0.
\end{align*}
In addition, we obtain
\begin{align*} & M(D^+-D^-)\\
&= \begin{pmatrix}
\ddots &  & & & &\\ & -1 & 1 & & & \\ & & -1 & 1-\eta_c & \eta_c &\\ & & & \eta_c-1 & 1-\eta_c &\\[.5mm]
& & & & -1 & 1 &\\ & & & & & -1 & 1\\ & & & & & & \ddots
\end{pmatrix}-\begin{pmatrix}
\ddots & & & &\\-1 & 1 & & &\\ & -1 & 1 & & & \\& & \eta_c-1 & 1-\eta_c & &\\& & -\eta_c & \eta_c-1 & 1 &\\
& & & & -1 & 1 &\\ & & & & & & \ddots
\end{pmatrix}\\
&=\begin{pmatrix}
\ddots &  & & & &\\ 1 & -2 & 1 & & & \\ & 1 & -2 & 1-\eta_c & \eta_c &\\ & & 1-\eta_c & 2(\eta_c-1) & 1-\eta_c &\\[.5mm]
& &\eta_c & 1-\eta_c & -2 & 1 &\\ & & & & 1 & -2 & 1\\ & & & & & & \ddots
\end{pmatrix}.
\end{align*}
This matrix is symmetric, weakly diagonally dominant with negative diagonal
 entries and is thus negative semidefinite.
\end{proof}
\end{lemma}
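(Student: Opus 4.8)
The plan is a direct verification of the two defining relations \eqref{eq:_def_upwindSBP1} and \eqref{eq:_def_upwindSBP2}, using that $M$ is diagonal and that $D^-$ and $D^+$ coincide with the standard equally weighted periodic first-order upwind resp.\ downwind operators except in the few rows and columns attached to $E_{c-1}$, $E_c$, $E_{c+1}$.

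First I would check $MD^- = -(D^+)^TM$ entrywise. Since $M=\operatorname{diag}(\dots,\Delta x,\alpha\Delta x,(1-\alpha)\Delta x,\Delta x,\dots)$, left multiplication by $M$ rescales row $i$ of $D^-$ by $M_{ii}$, while $(D^+)^TM$ has $(i,j)$-entry $M_{jj}D^+_{ji}$, so the claim amounts to $M_{ii}D^-_{ij}+M_{jj}D^+_{ji}=0$ for all $i,j$. Away from the stencil $\{c-2,\dots,c+2\}$ this is the classical periodic SBP identity for the uniformly weighted first-order upwind/downwind pair. Inside the stencil one reads the five relevant rows off the explicit forms of $D^-$ and $D^+$ given above: the purpose of the weights $\alpha\Delta x$ and $(1-\alpha)\Delta x$ is precisely to clear the $1/(\alpha\Delta x)$ and $1/((1-\alpha)\Delta x)$ factors appearing there, after which the entries are integers and multiples of $\eta_c$ and the cancellation is mechanical. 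The only thing requiring care is keeping track of which weight multiplies which row — the $E_c$-row carries weight $\alpha\Delta x$ and the $E_{c+1}$-row weight $(1-\alpha)\Delta x$ in $D^-$, with the mirror situation in $D^+$, where the $\eta_c$-terms sit in the $E_{c-1}$-row — so that the $\alpha$ and $1-\alpha$ denominators disappear and the $\eta_c$-weighted entries pair up with opposite signs.

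Second, for \eqref{eq:_def_upwindSBP2}: once the identity above holds, $M(D^+-D^-)$ is automatically symmetric, since $M(D^+-D^-)=MD^+-MD^- = -(D^-)^TM + (D^+)^TM = (D^+-D^-)^TM$ using $M=M^T$ together with both (equivalent) forms of the SBP relation. It then remains to compute $M(D^+-D^-)=MD^+-MD^-$ in the affected block — again the $1/\Delta x$ factors being cleared by the weights — giving a tridiagonal-type matrix whose diagonal entries are $-2$ away from the cut and $-2$, $2(\eta_c-1)$, $-2$ in the $E_{c-1}$, $E_c$, $E_{c+1}$ rows, all $\le 0$ since $\eta_c\in[0,1]$, and whose off-diagonal absolute-value row sums equal $2$ in every row except the $E_c$-row, where the sum is $2(1-\eta_c)=|2(\eta_c-1)|$; thus in each row the off-diagonal sum is at most the modulus of the diagonal entry. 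Hence $-M(D^+-D^-)$ is symmetric, weakly diagonally dominant with nonnegative diagonal, and therefore positive semidefinite (e.g.\ by Gershgorin's theorem), which is the assertion.

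Since everything reduces to a finite block computation, there is no conceptual obstacle; the main thing to get right is the index-and-sign bookkeeping around the cut in the first step, and to notice that the hypothesis $\eta_c\le 1$ is exactly what keeps the central diagonal entry $2(\eta_c-1)$ of $M(D^+-D^-)$ nonpositive and preserves weak diagonal dominance in that row.
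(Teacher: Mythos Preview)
Your proposal is correct and follows essentially the same route as the paper: a direct entrywise verification of $MD^-+(D^+)^TM=0$ after the diagonal weights clear the $1/\alpha$ and $1/(1-\alpha)$ factors, and then negative semidefiniteness of $M(D^+-D^-)$ via weak diagonal dominance with nonpositive diagonal. Your additional remark that symmetry of $M(D^+-D^-)$ is automatic from the SBP identity is a nice shortcut the paper does not spell out, but the overall argument is the same.
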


\subsection{SBP properties of DoD with central numerical flux}
\label{subsec:SBP_central}
As elaborated in Remark~\ref{rmk:SBP_bilinear_equivalence}, periodic SBP
properties are indicated by skew-symmetry of the bilinear
forms $a_h^\z$ and $J_h^\z$ in the DoD stabilized scheme \eqref{eq:dod_semidiscretization}.
A straightforward computation shows
that $J_h^\z$ is independent of the choice of $L_c$ and $R_c$, as long as
$L_c+R_c=1$.
Skew-symmetry of the bilinear form $a_h^\z$ for central fluxes is well-known
\cite[Section~2.2]{Hesthaven2007}, so it
remains to show skew-symmetry of $J_h^\z(u_h,w_h)$ in case of central fluxes.

\begin{lemma}\label{lemma:central_skew_symmetry}
The DoD stabilization based on central numerical fluxes is skew-symmetric.
\end{lemma}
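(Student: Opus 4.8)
The plan is to establish the skew-symmetry characterization of periodic SBP operators recalled in Remark~\ref{rmk:SBP_bilinear_equivalence}, that is, to show directly that $J_h^\z(u_h, w_h) = -J_h^\z(w_h, u_h)$ for all $u_h,w_h \in \mathcal{V}_h^p(\Omega_x)$, where $J_h^\z = J_h^{0,c,\z} + J_h^{1,c,\z}$; since $a_h^\z$ is already skew-symmetric for central fluxes \cite[Section~2.2]{Hesthaven2007}, this yields the claim for the complete stabilized form in \eqref{eq:dod_semidiscretization}, and the case of several cut cells follows by summing the localized contributions.

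First I would specialize \eqref{eq:def_J0_bilinar} and \eqref{eq:def_J1_bilinar} to the central flux, where $\mathcal{H}^\z(a,b) = \tfrac12(a+b)$, so that $\mathcal{H}^\z_a = \mathcal{H}^\z_b = \tfrac12$ are constant. In $J_h^{0,c,\z}$ the flux differences collapse to half-differences, $\mathcal{H}^\z(u_{c-1},u_{c+1}) - \mathcal{H}^\z(u_{c-1},u_c) = \tfrac12(u_{c+1}-u_c)$ at $x_{c-\frac12}$ and $\tfrac12(u_{c-1}-u_c)$ at $x_{c+\frac12}$. In $J_h^{1,c,\z}$, the second and third lines combine to $\tfrac{\eta_c}{2}\int_{E_c}\bigl(\sum_{j\in\mathcal{I}_c}K(j)\,u_j\bigr)\partial_x(w_{c-1}+w_{c+1})\,\mathrm{d}x$, and because $\sum_{j\in\mathcal{I}_c}K(j) = L_c - 1 + R_c = 0$ together with $L_c+R_c=1$, a short expansion reduces the whole term to the $L_c,R_c$-independent form
\begin{equation*}
J_h^{1,c,\z}(u_h,w_h) = \eta_c\!\int_{E_c}\!\Bigl[u_c\partial_x w_c - \tfrac12 u_c\,\partial_x(w_{c-1}+w_{c+1}) - \tfrac12(u_{c-1}+u_{c+1})\partial_x w_c + \tfrac12\bigl(u_{c+1}\partial_x w_{c-1} + u_{c-1}\partial_x w_{c+1}\bigr)\Bigr]\mathrm{d}x,
\end{equation*}
which incidentally reproves the $L_c,R_c$-independence noted before the lemma.

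Next I would form $J_h^{1,c,\z}(u_h,w_h) + J_h^{1,c,\z}(w_h,u_h)$ and integrate by parts on $E_c$: each monomial $\int_{E_c}\phi\,\partial_x\psi\,\mathrm{d}x$ appearing in the first summand pairs with the monomial $\int_{E_c}\psi\,\partial_x\phi\,\mathrm{d}x$ appearing in the second, so their sum is the boundary term $[\phi\psi]_{x_{c-\frac12}}^{x_{c+\frac12}}$. What remains is therefore a linear combination, evaluated at the two cut-cell faces, of the products $u_cw_c$, $u_cw_{c\pm1}$, $u_{c\pm1}w_c$, $u_{c+1}w_{c-1}$ and $u_{c-1}w_{c+1}$ of the extended polynomials. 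In parallel, expanding the jumps $\llbracket\cdot\rrbracket_{c\mp\frac12}$ (left value minus right value) in $J_h^{0,c,\z}(u_h,w_h) + J_h^{0,c,\z}(w_h,u_h)$ produces a linear combination of exactly these same products at the same two nodes. I would then verify node by node — at $x_{c-\frac12}$ and at $x_{c+\frac12}$ separately, minding the sign carried by $[\cdot]_{x_{c-\frac12}}^{x_{c+\frac12}}$ — that the coefficient of each of the five product types coming from the $J^1$-boundary terms is the negative of the one coming from $J^0$, so that the total vanishes. The symmetry $\mathcal{H}^\z_a = \mathcal{H}^\z_b$ and the identity $\sum_{j\in\mathcal{I}_c} K(j) = 0$ are precisely what make these coefficients cancel.

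The one genuinely fiddly point is this last bookkeeping step: the jumps relevant at $x_{c-\frac12}$ and $x_{c+\frac12}$ do not involve only the two cells adjacent to the respective face, but also the polynomial extended from the cell on the far side of the small cut cell (this is the whole point of the DoD correction), so one must be careful about which of $u_{c-1}, u_c, u_{c+1}$ enters each product and about the orientation conventions for $\llbracket\cdot\rrbracket$ and for $[\cdot]_{x_{c-\frac12}}^{x_{c+\frac12}}$. I would present the cancellation compactly as a small table listing, for each face and each of the five product types, the coefficient contributed by $J^0$ and by $J^1$.
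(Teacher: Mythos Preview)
Your approach is correct and essentially identical to the paper's: specialize to central fluxes, simplify $J_h^{0,\z}$ and $J_h^{1,\z}$, symmetrize, integrate by parts on the $J^1$ part, and verify the resulting boundary terms cancel the symmetrized $J^0$ part. The paper bypasses your anticipated ``fiddly bookkeeping'' by introducing the differences $f_1=u_{c-1}-u_c$, $f_2=u_c-u_{c+1}$, $g_1=w_{c-1}-w_c$, $g_2=w_c-w_{c+1}$, in terms of which $\tfrac{2}{\eta_c}J_h^{1,\z}(u_h,w_h)=-\int_{E_c}(f_2\partial_xg_1+f_1\partial_xg_2)\,\mathrm{d}x$ and $\tfrac{2}{\eta_c}J_h^{0,\z}(u_h,w_h)=-(f_2g_1)|_{x_{c-\frac12}}+(f_1g_2)|_{x_{c+\frac12}}$, so the cancellation becomes a one-line product-rule identity rather than a coefficient table.
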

\begin{proof}
In the case that central numerical fluxes ${\cal H}(a,b)=\frac12(a+b)$ are employed,
the interface stabilizing
terms $J_h^{0, \z}$ given in \eqref{eq:def_J0_bilinar} reduce to
\begin{align*}
\frac{2}{\eta_{c}}J_h^{0, \z}(u_h,w_h) &= (u_{c+1}-u_{c})(x_{c-\frac12})\cdot \llbracket w_h\rrbracket_{x_{c-\frac12}} + (u_{c-1}-u_{c})(x_{c+\frac12})\cdot \llbracket w_h\rrbracket_{x_{c+\frac12}}\\
&= -(f_2g_1)|_{x_{c-\frac12}} + (f_1g_2)|_{x_{c+\frac12}},
\end{align*}
where the abbreviation $f_1=u_{c-1}-u_{c},\ f_2 = u_{c}-u_{c+1},\ g_1=w_{c-1}-w_{c},\ g_2 = w_{c}-w_{c+1}$ has been employed.
By \eqref{eq:def_J1_bilinar}, we can write $J^{1, \z}$ as
\begin{align*}
\frac{1}{\eta_{c}}J_h^{1, \z}(u_h,w_h) &= \int_{E_c}\sum_{j\in{\cal I}_c} K(j)H(j)\,dx\,,
\end{align*}
where $H$ is given by
\begin{align*}
H(j) = \left({\cal H}(u_{c-1} , u_{c+1}) - f(u_j)\right) \partial_x w_j + {\cal H}_a (u_{c-1}, u_{c+1})u_j \partial_x w_{c-1} +{\cal H}_b (u_{c-1}, u_{c+1})u_j \partial_x w_{c+1},
\end{align*}

Inserting central numerical fluxes into the terms $H(j)$ and using $f(u)=u$, we obtain
\begin{align*}
H(j) = \left(\frac12\left(u_{c-1}+u_{c+1}\right) - u_j\right) \partial_x w_j + \frac12 u_j \partial_x w_{c-1} + \frac12 u_j \partial_x w_{c+1},
 \end{align*}
and thus
\begin{align*}
H(c-1) &= \frac12u_{c+1} \partial_x w_{c-1}  + \frac12 u_{c-1} \partial_x w_{c+1},\\
H(c) &=  \left(\frac12\left(u_{c-1}+u_{c+1}\right) - u_{c}\right) \partial_x w_{c} + \frac12 u_{c} \partial_x w_{c-1} + \frac12 u_{c} \partial_x w_{c+1}, \\
H(c+1) &= \frac12 u_{c-1}\partial_x w_{c+1} + \frac12 u_{c+1} \partial_x w_{c-1}\,.
 \end{align*}
Summarizing the terms with $K(c)=-1, \; K(c-1)+K(c+1)=1$ yields
\begin{align*}
\int_{E_c}\sum_{j\in{\cal I}_c} K(j)H(j)\,dx &= \frac12\int_{E_c}\left(u_{c+1}-u_{c}\right)\partial_x \left( w_{c-1}-w_{c} \right)+ \left(u_{c-1}-u_{c}\right) \partial_x \left(w_{c+1}-w_{c}\right)\, dx \\
&= -\frac12\int_{E_c}f_2\partial_x g_1+ f_1 \partial_x g_2\, dx.
\end{align*}

Proceeding to show skew-symmetry, we have
\begin{align*}
\frac{2}{\eta_{c}}\left(J_h^{0, \z}(u_h,w_h) + J_h^{0, \z}(w_h,u_h)\right) &= -(f_2g_1)|_{x_{c-\frac12}} + (f_1g_2)|_{x_{c+\frac12}} -  (g_2f_1)|_{x_{c-\frac12}} + (g_1f_2)|_{x_{c+\frac12}}\\
&= \left[f_1g_2+f_2g_1\right]_{x_{c-\frac12}}^{x_{c+\frac12}}\,.
\end{align*}
Furthermore, integration by parts yields
\begin{align*}
\frac{2}{\eta_{c}}\left(J_h^{1, \z}(u_h,w_h) + J_h^{1, \z}(w_h,u_h)\right) &= -\int_{c}f_2\partial_x g_1 + f_1 \partial_x g_2 + g_2\partial_x f_1+ g_1 \partial_x f_2\, dx \\
&= -\left[f_1g_2+f_2g_1\right]_{x_{c-\frac12}}^{x_{c+\frac12}}\,.
\end{align*}
Consequently,
we have
\[\frac{2}{\eta_{c}}\left(J_h^{0, \z}(u_h,w_h) + J_h^{0, \z}(w_h,u_h)+ J_h^{1, \z}(u_h,w_h) + J_h^{1, \z}(w_h,u_h)\right) = 0\]
and skew-symmetry of $J_h^\z = J_h^{0, \z}+J_h^{1, \z}$ is proven.
\end{proof}
By summing up the background and DoD terms, we obtain a result for the full operator
\begin{corollary}
    The full DoD-stabilized bilinear form
    $\widetilde{a}_h^\z(u_h, w_h) = a_h^\z(u_h, w_h) + J^\z_h(u_h, w_h)$
    is skew-symmetric.
\end{corollary}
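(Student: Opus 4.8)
The plan is to exploit the bilinearity of the skew-symmetry relation. Skew-symmetry of a bilinear form $B$ means $B(u_h,w_h) + B(w_h,u_h) = 0$ for all $u_h, w_h \in \mathcal{V}_h^p(\Omega_x)$, and this property is manifestly preserved under sums: if $B_1$ and $B_2$ are both skew-symmetric, then so is $B_1 + B_2$. Applying this with $B_1 = a_h^\z$ and $B_2 = J_h^\z$ immediately yields the claim, so the corollary reduces to verifying the two summands separately, both of which have essentially already been treated.

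For $J_h^\z = J_h^{0,\z} + J_h^{1,\z}$, skew-symmetry is exactly the content of Lemma~\ref{lemma:central_skew_symmetry}: the symmetrized interface contributions $J_h^{0,\z}(u_h,w_h)+J_h^{0,\z}(w_h,u_h)$ produce the boundary term $[f_1 g_2 + f_2 g_1]_{x_{c-\frac12}}^{x_{c+\frac12}}$, while the symmetrized volume contributions $J_h^{1,\z}(u_h,w_h)+J_h^{1,\z}(w_h,u_h)$ produce exactly its negative after integration by parts, so the two cancel. Recall also that $J_h^\z$ does not depend on the particular choice of $L_c, R_c$ beyond $L_c + R_c = 1$, so there is nothing further to check there.

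For the background operator $a_h^\z$ given by \eqref{eq:a_h} with central flux $\mathcal{H}^\z(a,b) = \tfrac12(a+b)$, skew-symmetry is classical \cite[Section~2.2]{Hesthaven2007}: integrating the volume term $-\int_{E_i} u_h \partial_x w_h \dif x$ by parts and adding the analogous term with $u_h$ and $w_h$ swapped converts each cell's contribution into boundary evaluations, and on each interface $x_{i+\frac12}$ the symmetrized central-flux terms collapse to $\jump{u_h w_h}_{i+\frac12}$; summing over all interfaces of the periodic partition makes these telescope to zero, with no leftover contribution precisely because the boundary conditions are periodic. The cut-cell mesh of Section~\ref{subsec:cut_cell_problem} is still a one-dimensional periodic partition, so this argument is unaffected by the presence of the small cell $E_c$.

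Since both summands are skew-symmetric, so is $\widetilde{a}_h^\z = a_h^\z + J_h^\z$. There is no genuine obstacle here; the corollary is a direct bookkeeping consequence of the preceding results. The only points requiring minor care are confirming that the periodicity of the mesh eliminates the boundary terms in the telescoping argument for $a_h^\z$ — guaranteed by our standing assumption of periodic boundary conditions — and observing that Lemma~\ref{lemma:central_skew_symmetry} applies to $J_h^\z$ verbatim on the cut-cell mesh.
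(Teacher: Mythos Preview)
Your proof is correct and follows exactly the same route as the paper: skew-symmetry of $a_h^\z$ is the well-known background DG result (Hesthaven), skew-symmetry of $J_h^\z$ is Lemma~\ref{lemma:central_skew_symmetry}, and the sum of skew-symmetric bilinear forms is skew-symmetric. The paper merely states ``By summing up the background and DoD terms, we obtain a result for the full operator'' before the corollary, so you have in fact written out more detail than the original.
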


\subsection{SBP properties of DoD with upwind numerical fluxes}
Like previously, we can tie the bilinear forms
with upwinding ($a^-_h, J^-_h$) and
downwinding ($a^+_h, J^+_h$) to the periodic upwind SBP properties.
We abbreviate by writing $\widetilde{a}^{\{+, -\}}_h = a_h^{\{+, -\}}+J_h^{\{+, -\}}$.
Then, the periodic upwind SBP properties \eqref{eq:bilinearform_upwind_sbp}
need to be fulfilled by $\widetilde{a}^{\{+, -\}}_h$ instead of $a_h^{\{+, -\}}$.
Note that the upwind SBP property of the background scheme consisting solely of
$a_h^{\{+, -\}}$ is well known and also implied by
Theorem~\ref{theorem:upwind_dod_is_sbp} with $\eta_c=0$.
We first consider a naive approach by employing the choice
$L_c=1, R_c=0$ for $J^-_h$ and $L_c=0, R_c=1$ for $J^+_h$ in \eqref{eq:def_J1_bilinar},
based on their flow direction, as proposed in \cite{may2022dod}.
Employing this stabilization for order $p>0$ will
unfortunately not result in a dual-pair of periodic upwind SBP operators. In particular,
the generalized skew-symmetry fails because of the aforementioned choices of $L_c$
and $R_c$, that introduce
direction-based terms, which can not be compensated.

Therefore, our first step is a slight ``symmetrization`` of this method by instead choosing
$L_c=R_c=1/2$ for both $J^-_h$ and $J^+_h$.
In our tests, this choice still provides stable numerical simulations.
Nevertheless, the resulting scheme is not semidiscretely stable for the telegraph equation and
most importantly, the resulting methods do not end up in periodic upwind SBP operators, but are
linked by the central scheme from Section~\ref{subsec:SBP_central} in the following representation.
\begin{lemma}\label{lemma:central_diss_splitting}
Consider the bilinear forms $a^{\{+, -\}}_h, J^{\{+, -\}}_h$ with $L_c=R_c=1/2$
and $a^\z_h, J^\z_h$ from \eqref{eq:dod_semidiscretization}.
Then, there is a dissipation operator $\widetilde{a}_h^{\text{diss}}(u_h, w_h)$, such that
\begin{align*}
    \widetilde{a}^+_h(u_h, w_h) &= \widetilde{a}_h^{\z}(u_h, w_h) - \widetilde{a}_h^{\text{diss}}(u_h, w_h), \\
    \widetilde{a}^-_h(u_h, w_h) &= \widetilde{a}_h^{\z}(u_h, w_h) + \widetilde{a}_h^{\text{diss}}(u_h, w_h).
\end{align*}
\end{lemma}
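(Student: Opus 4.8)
The plan is to exhibit $\widetilde{a}_h^{\text{diss}}$ explicitly by exploiting the decomposition of each operator into a background part and a DoD part, and showing that the decomposition claimed in the lemma holds separately for each part. Recall that for the background DG operators the identity $a_h^{\pm}(u_h,w_h) = a_h^{\z}(u_h,w_h) \mp a_h^{\text{diss}}(u_h,w_h)$ is classical, where $a_h^{\text{diss}}$ is the standard jump-penalty dissipation term $a_h^{\text{diss}}(u_h,w_h) = \frac12 \sum_i \llbracket u_h\rrbracket_{i+\frac12}\llbracket w_h\rrbracket_{i+\frac12}$; this follows immediately from $\mathcal{H}^\pm(a,b) = \mathcal{H}^\z(a,b) \mp \frac12(a-b)$ applied to \eqref{eq:a_h}. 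So I would set $\widetilde{a}_h^{\text{diss}} = a_h^{\text{diss}} + J_h^{\text{diss}}$ and reduce the claim to constructing a suitable $J_h^{\text{diss}}$ with $J_h^{\pm}(u_h,w_h) = J_h^{\z}(u_h,w_h) \mp J_h^{\text{diss}}(u_h,w_h)$ for the symmetrized choice $L_c=R_c=1/2$.

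First I would treat the interface part $J_h^{0,c,\delta}$. Plugging $\mathcal{H}^{\pm}(a,b) = \frac{a+b}{2} \mp \frac{a-b}{2}$ into \eqref{eq:def_J0_bilinar}, the central contribution reproduces $J_h^{0,c,\z}$ and the remaining terms are linear combinations of jumps $u_{c-1}-u_c$, $u_c-u_{c+1}$ evaluated at $x_{c\mp\frac12}$ times $\llbracket w\rrbracket$; collecting them gives the candidate $J_h^{0,c,\text{diss}}$, with the sign flipping between the $+$ and $-$ cases exactly as required. Next I would treat $J_h^{1,c,\delta}$: here I would use that $\mathcal{H}^\delta$ is bilinear in its shifted arguments so that $\mathcal{H}^\delta_a,\mathcal{H}^\delta_b$ are constants $\in\{0,\tfrac12,1\}$ with $\mathcal{H}^\pm_a = \tfrac12 \pm \tfrac12$, $\mathcal{H}^\pm_b = \tfrac12 \mp \tfrac12$, and that with $L_c=R_c=\tfrac12$ the coefficients $K(c\pm1)=\tfrac12$ are now symmetric in the two neighbors. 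Expanding \eqref{eq:def_J1_bilinar} with these substitutions, the part even in the $\mp$ sign reassembles $J_h^{1,c,\z}$ (which is why the symmetrized $L_c=R_c$ was needed — the naive $L_c=1,R_c=0$ choice would leave an uncancelled asymmetric remainder), and the odd part defines $J_h^{1,c,\text{diss}}$.

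The main obstacle I expect is bookkeeping in the volume term $J_h^{1,c,\delta}$: one has to carefully track which of the three summands ($\partial_x w_j$, $\partial_x w_{c-1}$, $\partial_x w_{c+1}$ contributions) picks up the sign flip through $\mathcal{H}^\delta$ versus $\mathcal{H}^\delta_a,\mathcal{H}^\delta_b$, and verify that after summing over $j\in\mathcal{I}_c$ with the symmetrized weights the ``central'' remainder is genuinely $J_h^{1,c,\z}$ rather than something merely resembling it. Once the even/odd split is confirmed termwise, defining $\widetilde{a}_h^{\text{diss}} := a_h^{\text{diss}} + \sum_{j\in\mathcal{S}}\bigl(J_h^{0,j,\text{diss}} + J_h^{1,j,\text{diss}}\bigr)$ and adding the background and DoD identities yields the two claimed relations immediately. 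I would also note in passing that $\widetilde{a}_h^{\text{diss}}$ inherited this way need not be symmetric or sign-definite — which is precisely the point made in the surrounding text, namely that this symmetrized scheme fails to give genuine upwind SBP operators and motivates the further adjustment in \mbox{Theorem~\ref{theorem:upwind_dod_is_sbp}}.
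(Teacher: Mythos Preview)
Your proposal is correct and takes essentially the same approach as the paper: both split $\widetilde{a}_h^{\delta}$ into background plus DoD interface ($J_h^{0}$) plus DoD volume ($J_h^{1}$) contributions, and for each piece exhibit the dissipation remainder explicitly by comparing the upwind/downwind form to the central one (the paper writes each of $J_h^{0,\pm}$, $J_h^{1,\pm}$ out in full and rearranges, whereas you organize the same computation via the uniform substitution $\mathcal{H}^{\pm}=\mathcal{H}^{\z}\mp\tfrac12(a-b)$). One harmless slip: with the paper's convention $\mathcal{H}^{+}(a,b)=b$, $\mathcal{H}^{-}(a,b)=a$, the partial derivatives are $\mathcal{H}^{\pm}_a=\tfrac12\mp\tfrac12$ and $\mathcal{H}^{\pm}_b=\tfrac12\pm\tfrac12$, opposite to what you wrote, but this does not affect the argument.
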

\begin{proof}
First, we split the dissipation operator in the natural sum of the background
method and the stabilization part by writing
$\widetilde{a}_h^{\text{diss}}(u_h, w_h)= a_h^{\text{diss}}(u_h, w_h) + J_h^{\text{diss}}(u_h, w_h)$.
We split the operators $\widetilde{a}^+_h(u_h, w_h)$ and $\widetilde{a}^-_h(u_h, w_h)$ in its summands
parts of the background scheme $a^{\{+, -\}}_h(u_h, w_h)$, the flux stabilization $J^{0,\{+, -\}}_h(u_h, w_h)$ and the volume
stabilization $J^{1,\{+, -\}}_h(u_h, w_h)$ and consider them separately.

First, we consider the \underline{background} part.
        We have
        \begin{multline*}
            a_h^{\delta}(u_h, w_h)=-\sum\limits_{i=1}^N\int\limits_{x_{i-1/2}}^{x_{i+1/2}}u_h\partial_xw_h\text{d}x
            \\
            + \sum\limits_{i=1}^N\frac{1}{2}(u_i+u_{i+1})(w_i-w_{i+1})(x_{i+\frac{1}{2}}) + \frac{1}{2}(w_i-w_{i+1})
            \begin{cases}
                (u_i-u_{i+1})(x_{i+\frac{1}{2}}), \quad \vel>0 \ (\delta = -)\\
                (u_{i+1}-u_{i})(x_{i+\frac{1}{2}}), \quad \vel<0\ (\delta = +).
            \end{cases}
        \end{multline*}
        We obtain immediately the desired splitting by choosing
        \begin{align*}
            a_h^{\text{\z}}(u_h, w_h)&= -\sum\limits_{i=1}^N\int\limits_{x_{i-1/2}}^{x_{i+1/2}}u_h\partial_xw_h\text{d}x
                + \sum\limits_{i=1}^N\frac{1}{2}(u_i+u_{i+1})(w_i-w_{i+1})(x_{i+\frac{1}{2}}),\\
            a_h^{\text{diss}}(u_h, w_h)&= \frac{1}{2}(u_i-u_{i+1})(w_i-w_{i+1})(x_{i+\frac{1}{2}}).
        \end{align*}

Next, we consider the \underline{flux stabilization}.
The central part of $J_h^{0,\{+, -\}}$ is given by
        \begin{equation*}
            J_h^{0,\z}(u_h, w_h)= \frac{\eta_c}{2}\left((u_{c+1}-u_c)(w_{c-1}-w_c)(x_{c-\frac{1}{2}})+(u_{c-1}-u_c)(w_c-w_{c+1})(x_{c+\frac{1}{2}})\right).
        \end{equation*}
        Therefore, we can write the upwind stabilizations as
        \begin{align*}
            J_h^{0,+}(u_h, w_h) &=\eta_c (u_{c+1}-u_c)(w_{c-1}-w_c)(x_{c-\frac{1}{2}})\\
                                &= J_h^{0,c}(u_h, w_h) + \frac{\eta_c}{2}\left((u_{c+1}-u_c)(w_{c-1}-w_c)(x_{c-\frac{1}{2}})-(u_{c-1}-u_c)(w_c-w_{c+1})(x_{c+\frac{1}{2}})\right),\\
            J_h^{0,-}(u_h, w_h) &= \eta_c (u_{c-1}-u_c)(w_c-w_{c+1})(x_{c+\frac{1}{2}})\\
                                &= J_h^{0,c}(u_h, w_h) + \frac{\eta_c}{2}\left((u_{c-1}-u_c)(w_c-w_{c+1})(x_{c+\frac{1}{2}}) - (u_{c+1}-u_c)(w_{c-1}-w_c)(x_{c-\frac{1}{2}})\right),
        \end{align*}
        and obtain the desired splitting by the choice
        $$J_h^{0, \text{diss}}(u_h, w_h) := \frac{\eta_c}{2}\left((u_{c-1}-u_c)(w_c-w_{c+1})(x_{c+\frac{1}{2}}) - (u_{c+1}-u_c)(w_{c-1}-w_c)(x_{c-\frac{1}{2}})\right).$$

Finally, we consider the \underline{volume stabilization}.
The central part of $J_h^{1,\{+, -\}}$ is given by
        \begin{equation*}
            J_h^{1,\z}(u_h, w_h)= \frac{\eta_c}{2}\int\limits_{x_{c-\frac{1}{2}}}^{x_{c+\frac{1}{2}}}
                 (u_{c+1}-u_c)\partial_x(w_{c-1}-w_c) + (u_{c-1}-u_c)\partial_x (w_{c+1}-w_c)\text{d}x.
        \end{equation*}
        Now we calculate the upwind volume fluxes by \cite[eq. (10)]{may2022dod} with $L=R=1/2$ to receive
        \begin{align*}
            J_h^{1,-}(u_h, w_h) &= \eta_c\int\limits_{x_{c-\frac{1}{2}}}^{x_{c+\frac{1}{2}}}
            -(u_{c-1}-u_c)\partial_x w_c + \frac{1}{2}(u_{c-1}-u_{c+1})\partial_x w_{c+1} + \frac{1}{2}(u_{c-1}-2u_c+u_{c+1})\partial_x w_{c-1}
            \text{d}x,\\
        J_h^{1,+}(u_h, w_h) &= \eta_c\int\limits_{x_{c-\frac{1}{2}}}^{x_{c+\frac{1}{2}}}
        -(u_{c+1}-u_c)\partial_x w_c + \frac{1}{2}(u_{c+1}-u_{c-1})\partial_x w_{c-1} + \frac{1}{2}(u_{c-1}-2u_c+u_{c+1})\partial_x w_{c+1}
            \text{d}x.
        \end{align*}
        Again by simply rearranging the terms to isolate $J_h^{1, \z}$ results in choosing the following dissipation term
        \begin{equation*}
            J_h^{1, \text{diss}}(u_h, w_h) := \eta_c\int\limits_{x_{c-\frac{1}{2}}}^{x_{c+\frac{1}{2}}}
            (u_{c+1}-u_{c-1})\partial_x w_c + (u_c-u_{c+1})\partial_x w_{c+1} + (u_{c-1}-u_c)\partial_x v_{c-1}
            \text{d}x,
        \end{equation*}
        that lets us write
        \begin{align*}
            J_h^{1,+}(u_h, w_h) &= J_h^{1,\z}(u_h, w_h) - J_h^{1, \text{diss}}(u_h, w_h)\\
            J_h^{1,-}(u_h, w_h) &= J_h^{1,\z}(u_h, w_h) + J_h^{1, \text{diss}}(u_h, w_h).
        \end{align*}
        We can combine now
        \begin{equation}\label{eq:dissipation_operator}
        \widetilde{a}_h^{diss}(u_h, w_h) = {a}_h^{diss}(u_h, w_h) + J_h^{0, \text{diss}}(u_h, w_h) + J_h^{1, \text{diss}}(u_h, w_h)
        \end{equation}
        that proves the statement.
\end{proof}

To obtain a periodic upwind SBP pair, we would require the dissipation term
$\widetilde{a}_h^{diss}$ to be symmetric. Note that the dissipation term of
the background method $a_h^{\text{diss}}(u_h, w_h)$ is already symmetric.
So our strategy is to split the
dissipation part of the stabilization and apply one half to stabilize $1/2 \ a_h^{\text{diss}}(u_h, w_h)$
as previously and the other half to stabilize $1/2 \ a_h^{\text{diss}}(w_h, u_h) = 1/2 \  a_h^{\text{diss}}(u_h, w_h)$
by $1/2 \ J_h^{\text{diss}}(w_h, u_h)$. This is therefore a consistent modification of the DoD-scheme. In other words,
we symmetrize the dissipation term. The desired properties are stated in the following theorem.
\begin{theorem}\label{theorem:upwind_dod_is_sbp}
    Consider the modified upwind schemes
    \begin{align*}
        \widetilde{a}_h^{+,\text{symm}}(u_h, w_h) &:= \widetilde{a}_h^{\z}(u_h, w_h) -
        1/2\left(\widetilde{a}_h^{\text{diss}}(u_h, w_h) + \widetilde{a}_h^{\text{diss}}(w_h, u_h)\right),\\
        \widetilde{a}_h^{-,\text{symm}}(u_h, w_h) &:= \widetilde{a}_h^{\z}(u_h, w_h) +
        1/2\left(\widetilde{a}_h^{\text{diss}}(u_h, w_h) + \widetilde{a}_h^{\text{diss}}(w_h, u_h)\right),
    \end{align*}
    where $\widetilde{a}_h^{\text{diss}}$ is defined as in Lemma~\ref{lemma:central_diss_splitting}, Eq.~\eqref{eq:dissipation_operator}. Then,
    $\left(\widetilde{a}_h^{+,\text{symm}}(u_h, w_h), \widetilde{a}_h^{-,\text{symm}}(u_h, w_h)\right)$ is
    a dual-pair of periodic upwind SBP operators, i.e.,
    \begin{itemize}
        \item[a)] $\widetilde{a}^{+,\text{symm}}_h(u_h, w_h) = -\widetilde{a}^{-,\text{symm}}_h(w_h, u_h)$,
        \item[b)] $\widetilde{a}^{+,\text{symm}}_h(u_h, u_h) - \widetilde{a}^{-,\text{symm}}_h(u_h, u_h) \le 0$.
    \end{itemize}
\end{theorem}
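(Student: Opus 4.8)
The plan is to verify properties a) and b) directly from the definitions of $\widetilde a_h^{\pm,\text{symm}}$, leveraging the already-established skew-symmetry of $\widetilde a_h^\z$ (Corollary after Lemma~\ref{lemma:central_skew_symmetry}) and the explicit form of the dissipation operator $\widetilde a_h^{\text{diss}}$ from Eq.~\eqref{eq:dissipation_operator}. The crucial observation is that the symmetrization $\widetilde a_h^{\text{diss},\text{symm}}(u_h,w_h):=\tfrac12\bigl(\widetilde a_h^{\text{diss}}(u_h,w_h)+\widetilde a_h^{\text{diss}}(w_h,u_h)\bigr)$ is, by construction, a symmetric bilinear form, i.e.\ $\widetilde a_h^{\text{diss},\text{symm}}(u_h,w_h)=\widetilde a_h^{\text{diss},\text{symm}}(w_h,u_h)$. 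This single structural fact, together with skew-symmetry of $\widetilde a_h^\z$, should yield both claims almost immediately.

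For part a): I would compute $\widetilde a_h^{-,\text{symm}}(w_h,u_h)=\widetilde a_h^{\z}(w_h,u_h)+\widetilde a_h^{\text{diss},\text{symm}}(w_h,u_h)$. By skew-symmetry of the central operator, $\widetilde a_h^{\z}(w_h,u_h)=-\widetilde a_h^{\z}(u_h,w_h)$, and by symmetry of the symmetrized dissipation, $\widetilde a_h^{\text{diss},\text{symm}}(w_h,u_h)=\widetilde a_h^{\text{diss},\text{symm}}(u_h,w_h)$. Hence $-\widetilde a_h^{-,\text{symm}}(w_h,u_h)=\widetilde a_h^{\z}(u_h,w_h)-\widetilde a_h^{\text{diss},\text{symm}}(u_h,w_h)=\widetilde a_h^{+,\text{symm}}(u_h,w_h)$, which is exactly a). For part b): evaluating on the diagonal, $\widetilde a_h^{\z}(u_h,u_h)=0$ by skew-symmetry, so $\widetilde a_h^{+,\text{symm}}(u_h,u_h)-\widetilde a_h^{-,\text{symm}}(u_h,u_h)=-2\,\widetilde a_h^{\text{diss},\text{symm}}(u_h,u_h)=-2\,\widetilde a_h^{\text{diss}}(u_h,u_h)$ (the symmetrization agrees with the original form on the diagonal). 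It therefore remains to show $\widetilde a_h^{\text{diss}}(u_h,u_h)\ge 0$.

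The main obstacle is precisely this last inequality: establishing that the full dissipation operator $\widetilde a_h^{\text{diss}}=a_h^{\text{diss}}+J_h^{0,\text{diss}}+J_h^{1,\text{diss}}$ from \eqref{eq:dissipation_operator} is nonnegative on the diagonal. The background part $a_h^{\text{diss}}(u_h,u_h)=\sum_i \tfrac12\jump{u_h}_{i+1/2}^2\ge 0$ is clear. For the stabilization contributions $J_h^{0,\text{diss}}$ and $J_h^{1,\text{diss}}$, I would set $u_h=w_h$, introduce $f_1=u_{c-1}-u_c$, $f_2=u_c-u_{c+1}$ as in Lemma~\ref{lemma:central_skew_symmetry}, and carry out the integration-by-parts on $E_c$ to convert the volume term $J_h^{1,\text{diss}}(u_h,u_h)$ into boundary terms at $x_{c\pm1/2}$ that combine with $J_h^{0,\text{diss}}(u_h,u_h)$. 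I expect the combined cut-cell dissipation to reduce, after cancellation, to a manifestly nonnegative boundary expression — plausibly proportional to $\eta_c$ times a sum of squares in the jumps $f_1,f_2$ (or in the jumps of $u_h$ across $x_{c-1/2}$ and $x_{c+1/2}$), mirroring how the analogous finite-volume computation in Section~\ref{subsec:SBP_dod_p=0} produced a weakly diagonally dominant negative-semidefinite matrix. The bookkeeping of which polynomial ($u_{c-1}$, $u_c$, or $u_{c+1}$) is differentiated and evaluated where, and matching the pointwise contributions from $J_h^{0,\text{diss}}$ against the integration-by-parts remainder from $J_h^{1,\text{diss}}$, is the delicate part; once that collapses to a sum of squares (times the nonnegative factor $\eta_c$), the proof is complete.
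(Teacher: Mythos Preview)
Your treatment of part a) and the reduction in part b) to the inequality $\widetilde a_h^{\text{diss}}(u_h,u_h)\ge 0$ are correct and coincide with the paper's argument.

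The difference lies in how this last inequality is established. You propose a direct computation: expand $a_h^{\text{diss}}+J_h^{0,\text{diss}}+J_h^{1,\text{diss}}$ on the diagonal, integrate by parts on $E_c$, and hope the result collapses to a sum of squares. You have not actually carried this out, and it is not obvious that the cut-cell contributions organize themselves into a manifestly nonnegative expression on their own; the cross terms such as $\tfrac{\eta_c}{2}\bigl(f_1f_2|_{x_{c+1/2}}+f_1f_2|_{x_{c-1/2}}\bigr)$ arising from $J_h^{0,\text{diss}}(u_h,u_h)$ are sign-indefinite and must be controlled jointly with the background jump terms at $x_{c\pm1/2}$ and the volume contribution. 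Completing this would essentially amount to reproving the semidiscrete $L^2$-stability of the upwind DoD scheme for linear advection.

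The paper avoids this computation entirely by recycling precisely that stability result. From Lemma~\ref{lemma:central_diss_splitting} (with $L_c=R_c=\tfrac12$) one has $\widetilde a_h^{-}=\widetilde a_h^{\z}+\widetilde a_h^{\text{diss}}$, and since $\widetilde a_h^{\z}(u_h,u_h)=0$ by skew-symmetry, it follows that $\widetilde a_h^{\text{diss}}(u_h,u_h)=\widetilde a_h^{-}(u_h,u_h)$. The right-hand side is nonnegative by \cite[Theorem~6]{may2022dod}, which is exactly the $L^2$-stability estimate for the DoD-stabilized upwind advection scheme, already established in the literature and referenced earlier as \eqref{eq:positivity_of_dod_bilinearform}. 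This one-line reduction is the key idea you are missing; it turns the ``delicate bookkeeping'' you anticipate into an immediate consequence of results already in hand.
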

    \begin{proof}
        Using the skew symmetry of $\widetilde{a}_h^{\z}$ (obtained by Lemma~\ref{lemma:central_skew_symmetry}),
        a) is immediately fulfilled by direct computation.

        For b), we receive
        \begin{align*}
           \widetilde{a}^{+,\text{symm}}_h(u_h, u_h) - \widetilde{a}^{-,\text{symm}}_h(u_h, u_h)
            = - 2\widetilde{a}_h^{\text{diss}}(u_h, u_h).
        \end{align*}
        It remains to show that $\widetilde{a}_h^{\text{diss}}(u_h, u_h)\ge0$. In \cite[Theorem 6]{may2022dod}
        is proven, that
        \begin{equation}\label{eq:positivity_of_dod_bilinearform}
            \widetilde{a}_h^{-}(u_h, u_h) \ge 0 \quad \text{for any} \; u_h.
        \end{equation}
        Remark that we specified here the special case of the transport equation with an upwind flux and $L_c=R_c=1/2$. Further, the
        skew-symmetry of $\widetilde{a}_h^{\z}$ implies
        \begin{equation*}
            \widetilde{a}_h^{\z}(u_h, u_h)=0.
        \end{equation*}
        Expanding $\widetilde{a}_h^{\text{-}}$ and using the previous identity, we conclude by achieving
        \begin{equation*}
            0 \le \widetilde{a}_h^{\text{-}}(u_h, u_h) = \widetilde{a}_h^{\z}(u_h, u_h) + \widetilde{a}_h^{\text{diss}}(u_h, u_h)
              = \widetilde{a}_h^{\text{diss}}(u_h, u_h).
              \qedhere
        \end{equation*}
    \end{proof}

\subsection{Summary of SBP properties}
Let the derivative operators $D^\z, D^+,D^-$ be defined by
\begin{equation}\label{eq:D1_D2_pairs_dod}
\left(a^{\z}_h+J^\z_h\right)(u_h, w_h) = \underline{u}^TD^\z\underline{w}, \quad \widetilde{a}^{+, \text{symm}}_h(u_h, w_h)
= \underline{u}^TD^+\underline{w}, \quad \widetilde{a}^{-, \text{symm}}_h(u_h, w_h) = \underline{u}^TD^-\underline{w}.
\end{equation}
Then, $D^\z$ is a periodic SBP operator in the classical sense and the pair
$(D^+,D^-)$ is a dual-pair of upwind SBP operators, which can be decomposed in
a central part given by $D^\z$ and a dissipation part.
Therefore, these operators are included in the analysis of Section~\ref{sec:analysis}.

Further, the classic DoD scheme \eqref{eq:def_J0_bilinar} already inherits periodic
upwind SBP properties for $p=0$, as shown in Section~\ref{subsec:SBP_dod_p=0}.
Therefore, the symmetrization procedure is not necessary in this special case.

\begin{remark}
    The (upwind) SBP properties derived in this chapter transfer
    directly to the DGSEM with Gauss-Lobatto-Legendre collocation
    nodes \cite[Section~3.2]{Kopriva2009}, as all involved integrals
    (including the derivations in \cite[Theorem 6]{may2022dod} to prove
    \eqref{eq:positivity_of_dod_bilinearform}) contain
    polynomials of maximum degree $2p-1$ and are therefore exact using
    GLL quadrature.
\end{remark}

\section{Numerical results}
\label{sec:numerics}
In the following, we provide numerical results for the DoD-stabilized schemes defined by
the operators \eqref{eq:D1_D2_pairs_dod} resulting in the semidiscretization
\eqref{eq:semidiscretization_bilinearform} or \eqref{eq:heat_semidiscretization}
respectively, to underline our theoretical findings.
This includes a convergence test, followed by a numerical asymptotic analysis
for $\epsilon \rightarrow 0$. Finally, we examine
the relevance of DoD by treating the asymptotic limit equation implicitly in time, as
this is closer to a practical application. We only show results for one of the two
alternating flux cases respectively, i.e., $\Drho = D^-\;, \Dg=D^+$, but we want to
emphasize that the reversed case did not lead to any remarkable discrepancies.
All the numerical methods are implemented in Julia \cite{bezanson2017julia}, while the
code is based on the infrastructure of
SummationByPartsOperators.jl~\cite{ranocha2021sbp}. The plots are generated
with Makie.jl~\cite{danisch2021makie}.
All code and data required to reproduce the numerical results are
available in our reproducibility repository \cite{petri2025kineticRepro}.

\subsection{Convergence}
In the following, we want to present numerical convergence results by applying the
DoD-stabilized operators \eqref{eq:D1_D2_pairs_dod} in the scheme
\eqref{eq:semidiscretization_matrixform} with an
appropriate IMEX splitting \eqref{eq:imex_splitting} in time to the telegraph equation on a cut-cell
mesh with five small cut cells of size $\alpha \Delta x$, where $\Delta x$ is the
background mesh size and $\alpha \in \{10^{-7}, 10^{-3}, 10^{-1}, 0.3, 0.49\}$.
We examine different values of $\epsilon$ and apply the following condition to the
time step size and the penalty parameter:
$$\Delta t = \frac{C_{\text{pre}}(p)}{2p+1}\epsilon \Delta x, \quad \eta_c = 1-\frac{\alpha}{\lambda_c(p)}, \;
\text{where }
\begin{cases}\lambda_c(p) = 1.0, \; C_{\text{pre}}(p) = 0.5,  \quad &p=0 \\
    \lambda_c(p) = 0.55, \; C_{\text{pre}}(p) = 0.3, \quad &p=1 \\
    \lambda_c(p) = 0.45, \; C_{\text{pre}}(p) = 0.15, \quad &p=2 \\
\end{cases}.$$
The reduced stabilization and CFL condition by increasing the degree of basis
functions are necessary to ensure stability, see \cite{petri2025domain}.
For time-stepping, we use the third-order IMEX method ARS(4,4,3) \cite[Sec.~2.8]{ascher1997imex}, given by the following Butcher tableaux:
\begin{equation}\label{eq:ARS3_tableau}
\begin{array}{c|ccccc}
0 & 0 & 0 & 0 & 0 & 0\\
1/2 & 1/2 & 0 & 0 & 0 & 0\\
2/3 & 11/18 & 1/18 & 0 & 0 & 0\\
1/2 & 5/6 & -5/6 & 1/2 & 0 & 0\\
1 & 1/4 & 7/4 & 3/4 & -7/4 & 0\\
\hline
 & 1/4 & 7/4 & 3/4 & -7/4 & 0
\end{array}
\qquad
\begin{array}{c|ccccc}
0 & 0 & 0 & 0 & 0 & 0\\
1/2 & 0 & 1/2 & 0 & 0 & 0\\
2/3 & 0 & 1/6 & 1/2 & 0 & 0\\
1/2 & 0 & -1/2 & 1/2 & 1/2 & 0\\
1 & 0 & 3/2 & -3/2 & 1/2 & 1/2\\
\hline
 & 0 & 3/2 & -3/2 & 1/2 & 1/2
\end{array}
\end{equation}
Convergence results for the first component are given in Figure~\ref{fig:convergence_1D_telegraph}
for basis polynomials of degrees zero to two. We compare our scheme with the exact solution
\cite[section 4.1]{Jang_etal:2014}
\begin{equation*}
  \begin{cases}
    \rho(x, t) = \frac{1}{r}e^{rt}\sin(x), \quad r = \frac{-2}{1+\sqrt{1-4\epsilon^2}},\\
    \tg(x, t) = e^{rt}\cos(x),
  \end{cases}
\end{equation*}
on $\Omega_x = [-\pi, \pi]$.
\begin{figure}
  \begin{minipage}[t]{0.32\textwidth}
    \centering
    \includegraphics[width=\textwidth]{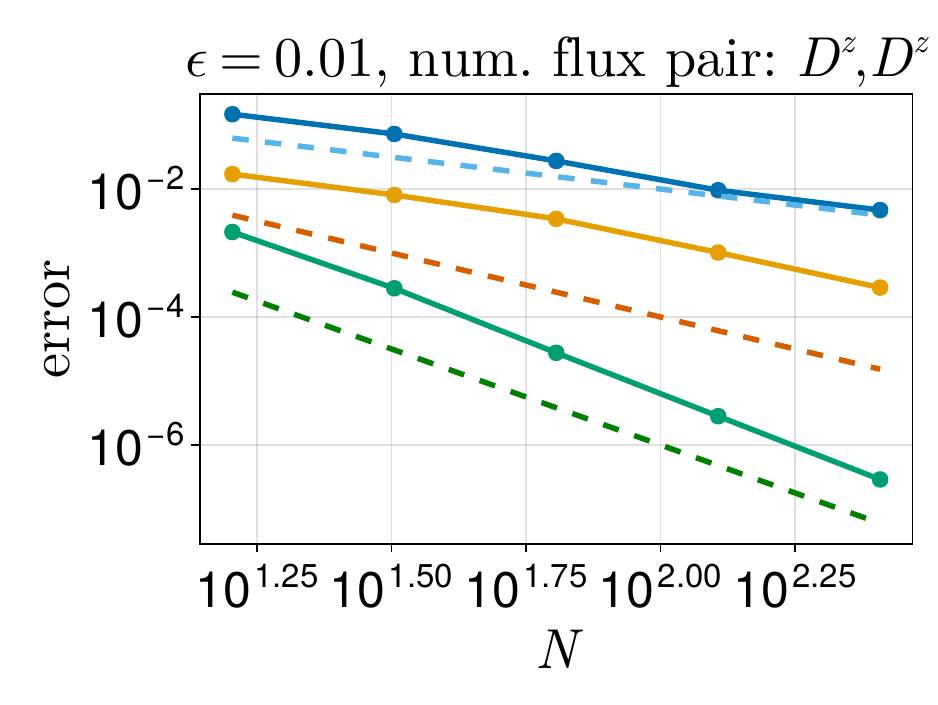}\\
  \end{minipage}
  \begin{minipage}[t]{0.32\textwidth}
    \centering
    \includegraphics[width=\textwidth]{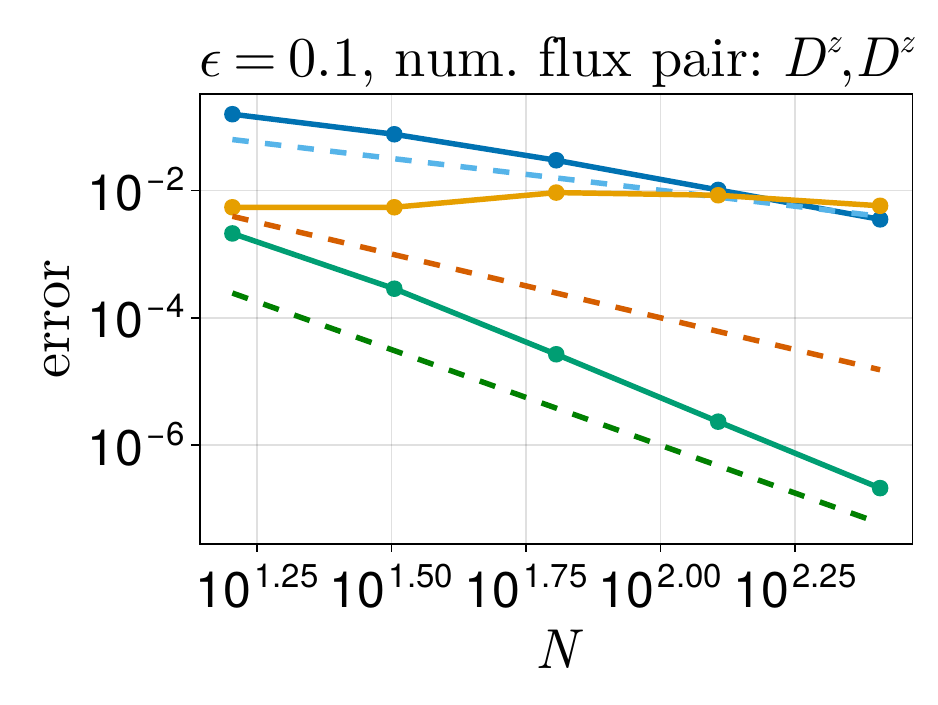}\\
  \end{minipage}
  \begin{minipage}[t]{0.32\textwidth}
    \centering
    \includegraphics[width=\textwidth]{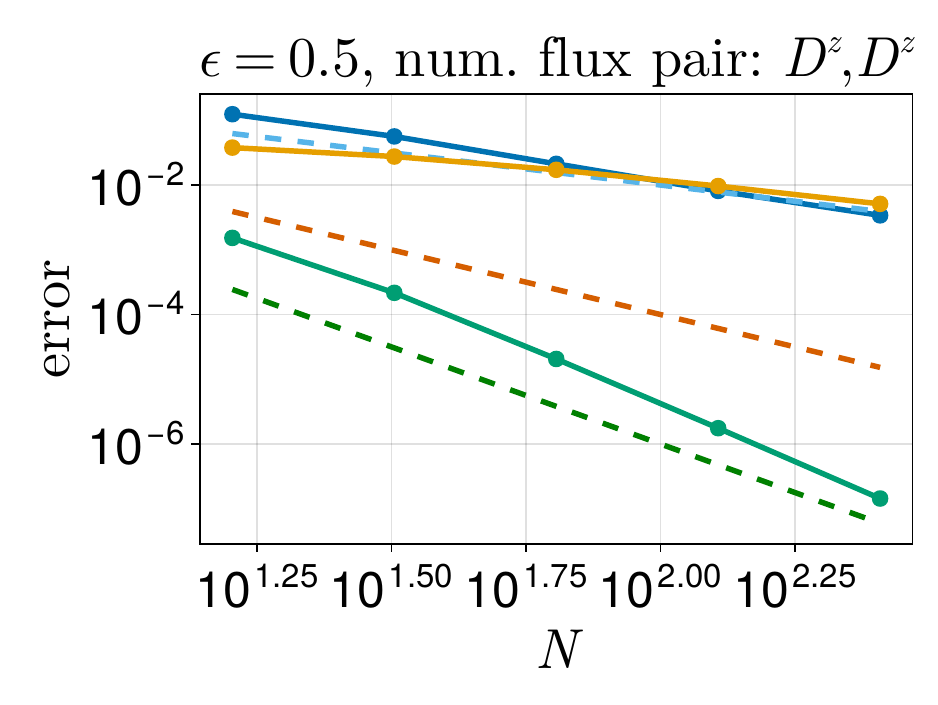}\\
  \end{minipage}\\

  \begin{minipage}[t]{0.32\textwidth}
    \centering
    \includegraphics[width=\textwidth]{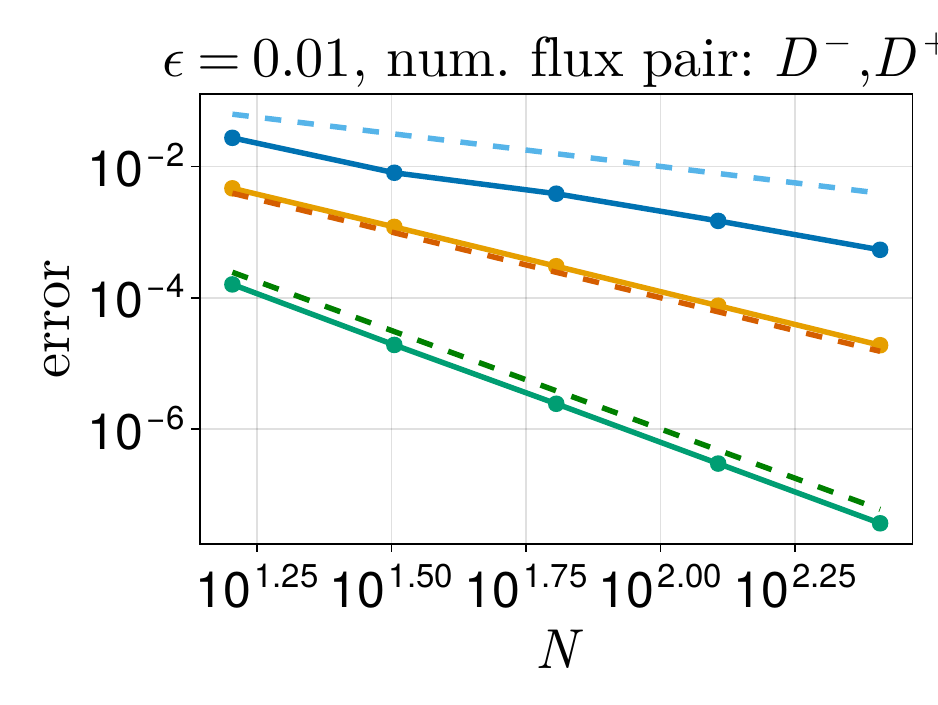}\\
  \end{minipage}
  \begin{minipage}[t]{0.32\textwidth}
    \centering
    \includegraphics[width=\textwidth]{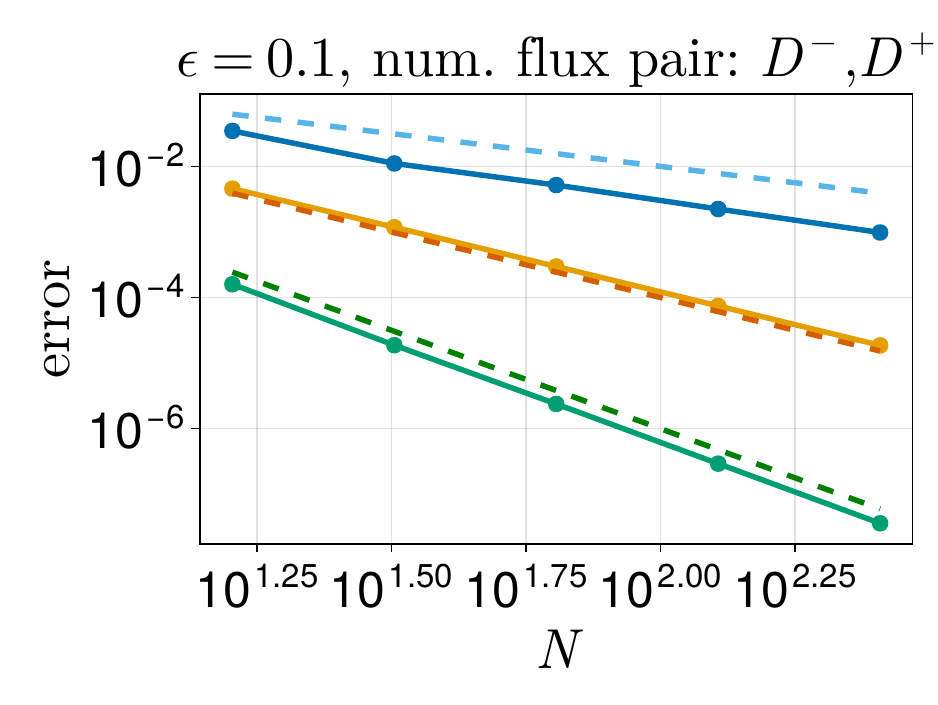}\\
  \end{minipage}
  \begin{minipage}[t]{0.32\textwidth}
    \centering
    \includegraphics[width=\textwidth]{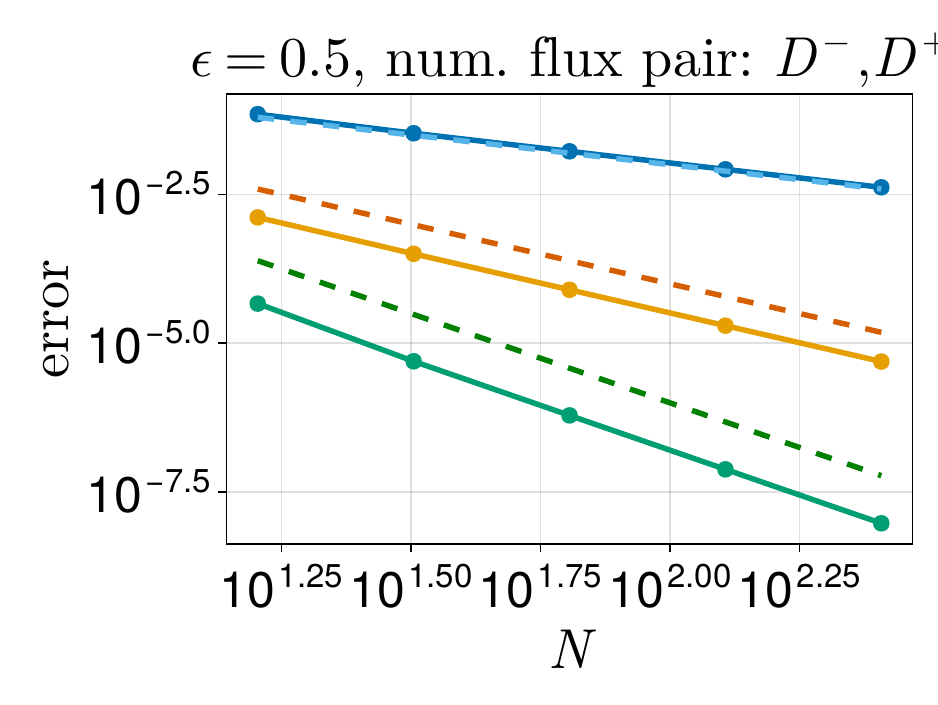}\\
  \end{minipage}
  \centering
  \includegraphics[width=0.7\textwidth, trim ={0 5.2cm 0 5cm} , clip]{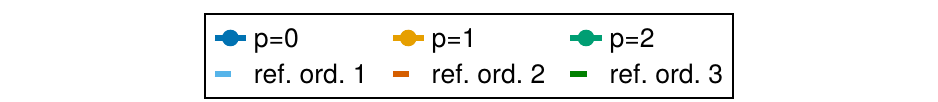}
  \caption{$L^2$ errors for the DoD-stabilized telegraph equation at the time $T=1$.}
  \label{fig:convergence_1D_telegraph}
\end{figure}
In the case of the alternating numerical fluxes, we observe an experimental
order of convergence $p + 1$ as expected. For the central numerical fluxes, we
observe reduced convergence for $p=1$, which aligns with the results of the
background method in \cite{Jang_etal:2014}.
We also observe, that this effect is less severe for smaller $\epsilon$.
\begin{figure}
  \begin{minipage}[t]{0.32\textwidth}
    \centering
    \includegraphics[width=\textwidth]{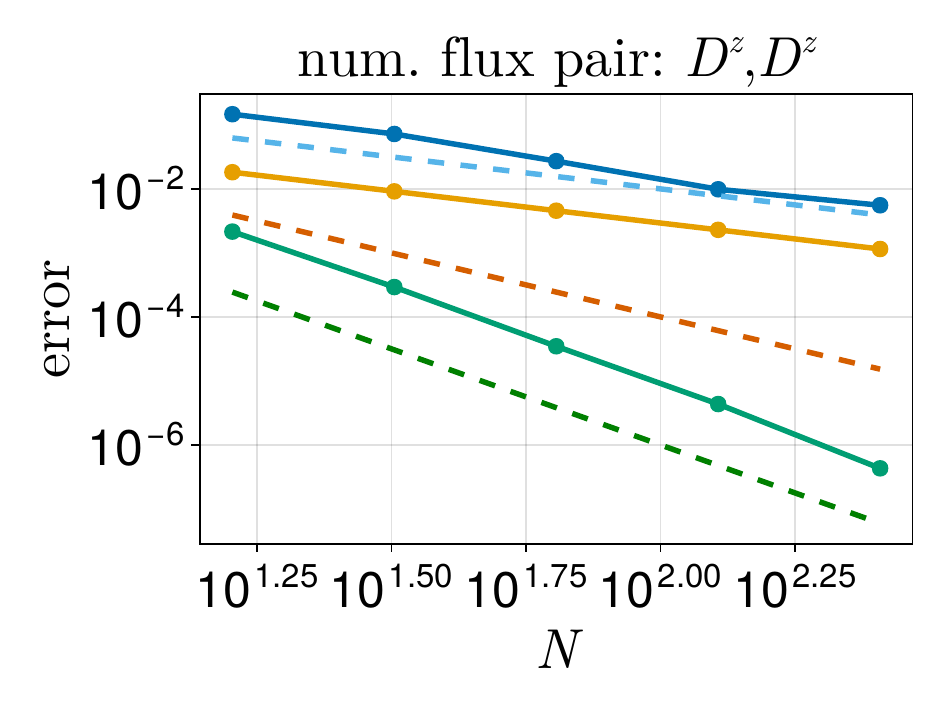}\\
  \end{minipage}
  \begin{minipage}[t]{0.32\textwidth}
    \centering
    \includegraphics[width=\textwidth]{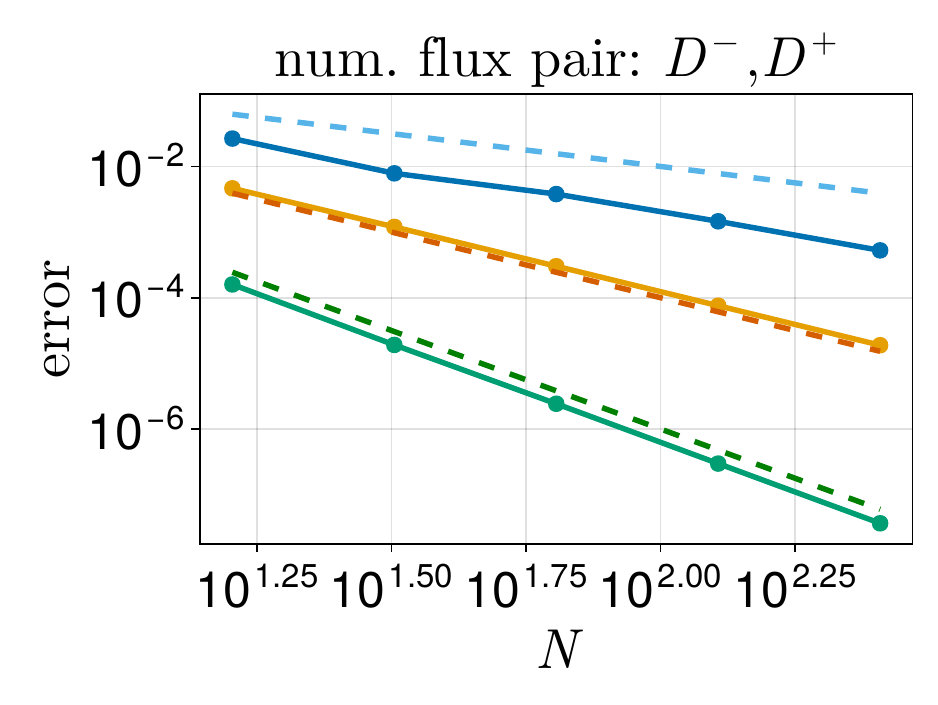}\\
  \end{minipage}
  \centering
  \caption{$L^2$ errors for the DoD-stabilized heat equation at the time $T=1$.}
  \label{fig:convergence_1D_heat}
\end{figure}

In Figure~\ref{fig:convergence_1D_heat}, we show convergence results for the
heat equation, treated with the explicit part of the ARS(4,4,3) method,
as the formal asymptotic analysis leads to this discretization in the limit
$\epsilon\rightarrow 0$. The remaining setup and parameters
do not change in comparison to the setting for the telegraph equation
except for choosing $C_{\text{pre}}(2)=0.0375$ to obtain
stable numerical results.
Again, we receive the expected
convergence results for the alternating fluxes, while the central fluxes
seem to perform better for the heat equation than for the telegraph equation.
Similar to the telegraph equation, the optimal orders
of $p+1$ are not reached exactly for central fluxes and $p=1$.

\subsection{Asymptotic analysis}
To numerically verify the asymptotic convergence of the cut-cell scheme, we
compare numerical solutions of the telegraph equation and the heat equation
for an aligning setup using a discrete set of $\epsilon$‘s that approach $0$.
We consider a grid of $N=2^4$ background cells and insert cuts, such that
we obtain cut cells with the same fractions $\alpha$ as in the last section.
We also use the same values for $\lambda_c$, depending on the polynomial
degree of basis functions in space and use the parabolic CFL condition for
every calculation, to not be restricted by small $\epsilon$. Recall that
this is expected to also provide stable results for the telegraph case as
mentioned in Section~\ref{subsec:semidiscr_background}.

Using Gauss-Legendre collocation nodes $\vec{x}$, we consider well-prepared
initial conditions
\begin{align*}
    \rho^0 &= \frac{1}{r}\sin(\vec{x}), &\quad r = \frac{-2}{1+\sqrt{1-4\epsilon^2}},\\
    \tg^0 &= -\Dg \rho^0,
\end{align*}
for the telegraph equation and $\rho^0$ for the heat equation. For time-stepping,
we use again the ARS(4,4,3) (\eqref{eq:ARS3_tableau}, GSA Type II) and the
second order IMEX SSP2(3,2,2) \cite{pareschi2005} (non-GSA Type I), given by the Butcher-tableaux
\begin{equation*}
\begin{array}{c|ccc}
0   & 0   & 0   & 0\\
1/2 & 1/2 & 0   & 0\\
1   & 1/2 & 1/2 & 0\\
\hline
    & 1/3 & 1/3 & 1/3
\end{array}
\qquad
\begin{array}{c|ccc}
1/4 & 1/4 & 0   & 0\\
1/4 & 0   & 1/4 & 0\\
1   & 1/3 & 1/3 & 1/3\\
\hline
    & 1/3 & 1/3 & 1/3
\end{array}
\end{equation*}
In Figure~\ref{fig:asymptotic_conv}, we show their asymptotic behavior that align
with the theoretical results. Some implementation notes and details for the ARS(4,4,3)
method are given in Appendix~\ref{sec:appendix}.

\begin{figure}
  \begin{minipage}[t]{0.4\textwidth}
    \centering
    \includegraphics[width=\textwidth]{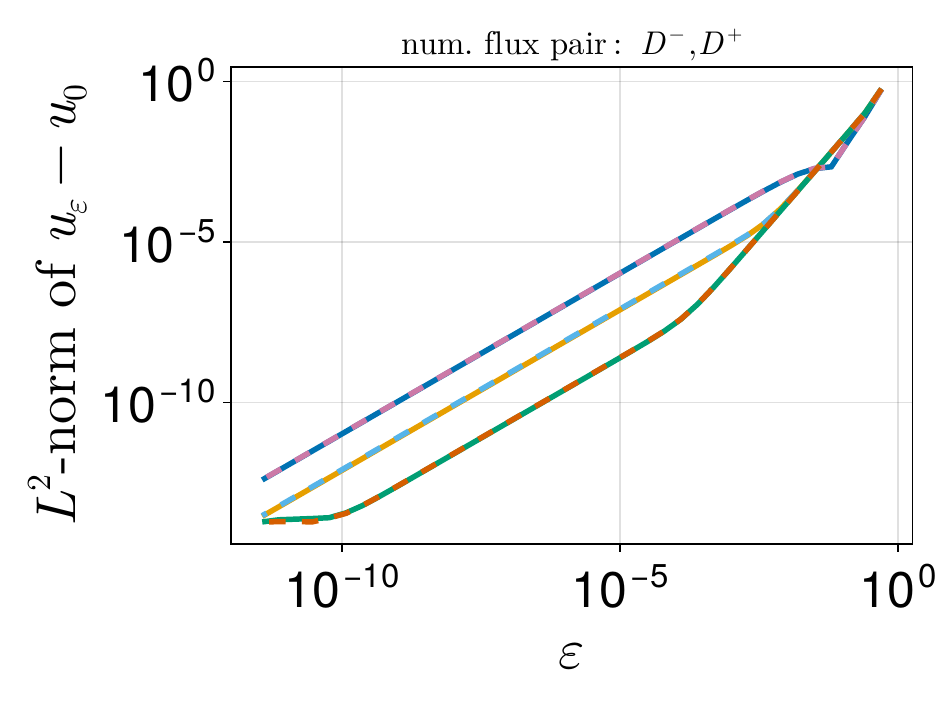}\\
  \end{minipage}
  \begin{minipage}[t]{0.4\textwidth}
    \centering
    \includegraphics[width=\textwidth]{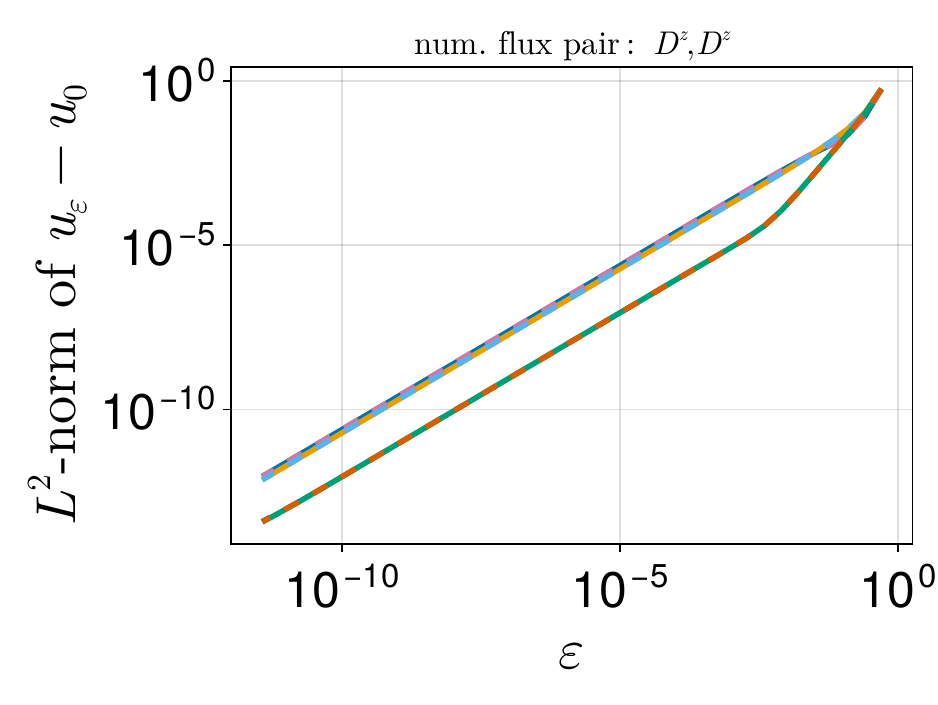}\\
  \end{minipage}
  \centering
  \includegraphics[width=0.7\textwidth, trim ={0 5.2cm 0 5.2cm} , clip]{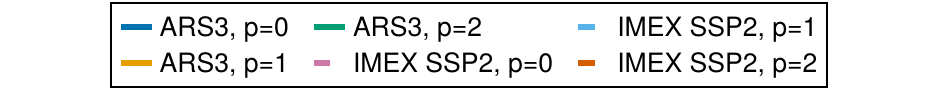}
  \caption{Numerical validation of the asymptotic
    behavior for the DoD-stabilized telegraph
    equation for $\epsilon \to 0$ at the time $T=0.5$. The plots show
    the regularization error measured in the $L^2$-norm for the
    numerical flux pairs $(D^-,D^+)$ and $(D^z,D^z)$, with polynomial degrees $0,1,2$
    and for the two time-stepping methods
    ARS(4,4,3) and IMEX SSP2(3,2,2).}
  \label{fig:asymptotic_conv}
\end{figure}

\subsection{Implicit discretization of the heat equation}
The proposed fully-discrete scheme results in a consistent discretization of the
heat equation, that is explicit in time. Usually, an implicit discretization of
such terms is preferred, to avoid the resulting background CFL restriction
$\Delta t \le C \Delta x^2$ in the explicit case. Therefore, one could ask
how the DoD stabilization impacts the performance of the method under application
of an implicit time discretization: Given the semidiscretization
\eqref{eq:heat_semidiscretization}, the implicit Euler method yields
\begin{equation} \label{eq:impleuler_heat}
    u^{n+1}=\left(\I-\Delta t \Drho \Dg \right)^{-1}u^n.
\end{equation}
The condition of $\left(\I-\Delta t \Drho \Dg \right)$ is crucial for the stability
of the solution to \eqref{eq:impleuler_heat}. Furthermore, solving equations of
the form \eqref{eq:impleuler_heat} also appears as a building block in higher
order implicit RK schemes in every internal stage.
The ($L^2$) operator norm of the matrix $\left(\I-\Delta t \Drho \Dg \right)^{-1}$
scales with $1/\min\limits_i\{\Delta x\}$
without applying a stabilization, which gets resolved to $C_p/\Delta x$ with DoD,
where $C_p$ depends on the polynomial degree $p$ of the basis polynomials in the
employed DG method \cite{petri2025domain}.

In line with this property, we will numerically examine how the condition behaves
when considering cut cells with and without stabilization. For that, we compute
the condition number
\begin{equation}\label{eq:condition_equation}
    \kappa =  \left\|\I-\Delta t \Drho \Dg \right\|_M\left\|\left(\I-\Delta t \Drho \Dg \right)^{-1}\right\|_M,
\end{equation}
where $\|\cdot\|_M$ is the discrete but exact $L^2$ norm. In our experiments,
we use an equidistant grid with $2^7$ background cells in
$\Omega_x = [-\pi, \pi]$ either with or without additional six cuts applied to
the grid.  In the latter case this results in small cut cells of size
$\alpha \Delta x$, where $\Delta x$ is the background mesh size and
$\alpha \in \{10^{-7}, 10^{-3}, 10^{-1}, 0.25, 0.4, 0.49\}$. For the time step,
we chose a parabolic, but cut-cell independent CFL condition
$\Delta t = \Delta x^2/(20(2p+1))$.
The results are displayed
in Table~\ref{table:condition_results}.
\begin{table}[h!]
\caption{Condition numbers \eqref{eq:condition_equation} for different semidiscretizations.}
\label{table:condition_results}
\centering
\begin{tabular}{|c|c|c|c|c|c|c|}
\hline

\multirow{2}{*}{$\mathbf{\kappa}$} &
\multicolumn{2}{|c|}{\textbf{Background}} &
\multicolumn{2}{|c|}{\textbf{Unstabilized}} &
\multicolumn{2}{|c|}{\textbf{DoD (stabilized)}} \\

 & \textbf{$D^-, D^+$} & \textbf{$D^\z, D^\z$}
 & \textbf{$D^-, D^+$} & \textbf{$D^\z, D^\z$}
 & \textbf{$D^-, D^+$} & \textbf{$D^\z, D^\z$} \\
\hline

$p=0$ & \num{1.0318} & \num{1.0080} & \num{7.9578e11} & \num{3.9790e4}  & \num{1.0579} & \num{1.0095} \\
\hline
$p=1$ & \num{1.0955} & \num{1.0424} & \num{3.5257e12} & \num{7.9578e12} & \num{1.3269} & \num{1.0891} \\
\hline
$p=2$ & \num{1.236}  & \num{1.1039} & \num{7.668e12}  & \num{2.8648e12} & \num{2.1555} & \num{1.4514} \\
\hline
$p=3$ & \num{1.4990} & \num{1.2004} & \num{1.4353e13} & \num{5.9011e12} & \num{4.5745} & \num{2.1256} \\
\hline
$p=4$ & \num{1.9242} & \num{1.3424} & \num{2.4982e13} & \num{9.9692e12} & \num{12.0600} & \num{4.9771} \\
\hline
$p=5$ & \num{2.5501} & \num{1.5402} & \num{4.0679e13} & \num{1.5298e13} & \num{31.2335} & \num{9.4726} \\

\hline
\end{tabular}
\end{table}
We observe that the DoD stabilization is crucial to obtain a well-conditioned system
in the presence of small cut cells. While this result verifies numerically that DoD
is well conditioned, it also shows that simply adding implicitness
to the time discretization to avoid a CFL-restriction is not sufficient to solve the small-cell problem.
Using the previously mentioned cut-cell mesh, we can observe these condition numbers numerically
by applying the midpoint rule to the heat equation with $\rho(x,0)=\cos(x)$. We use $N=2^5$ background
cells and $\Delta t = \Delta x/(10(2p+1))$.
In Figure~\ref{fig:heat_simulation_implicit}, we show results for all three different cases displayed in
Table~\ref{table:condition_results} for $t=5$.
\begin{figure}
  \begin{minipage}[t]{0.325\textwidth}
    \centering
    \includegraphics[width=\textwidth]{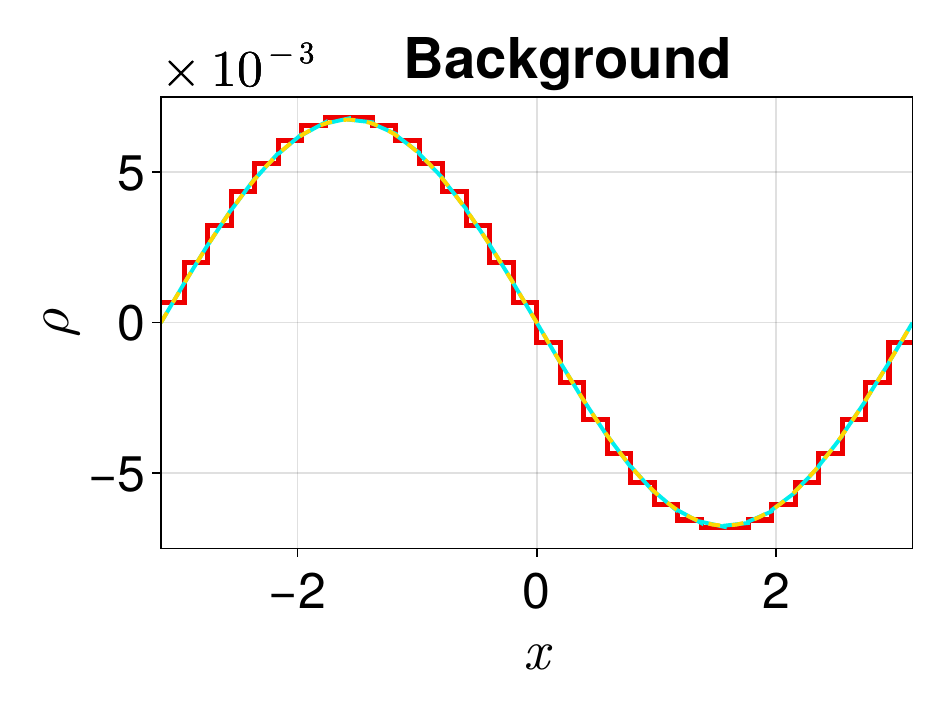}\\
  \end{minipage}
  \begin{minipage}[t]{0.325\textwidth}
    \centering
    \includegraphics[width=\textwidth]{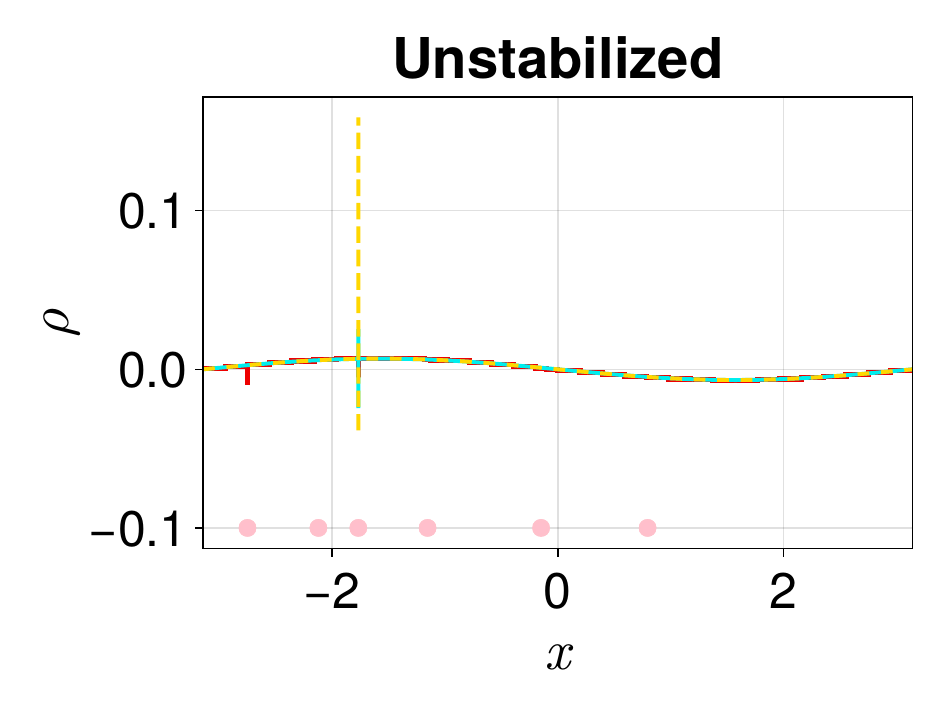}\\
  \end{minipage}
    \begin{minipage}[t]{0.325\textwidth}
    \centering
    \includegraphics[width=\textwidth]{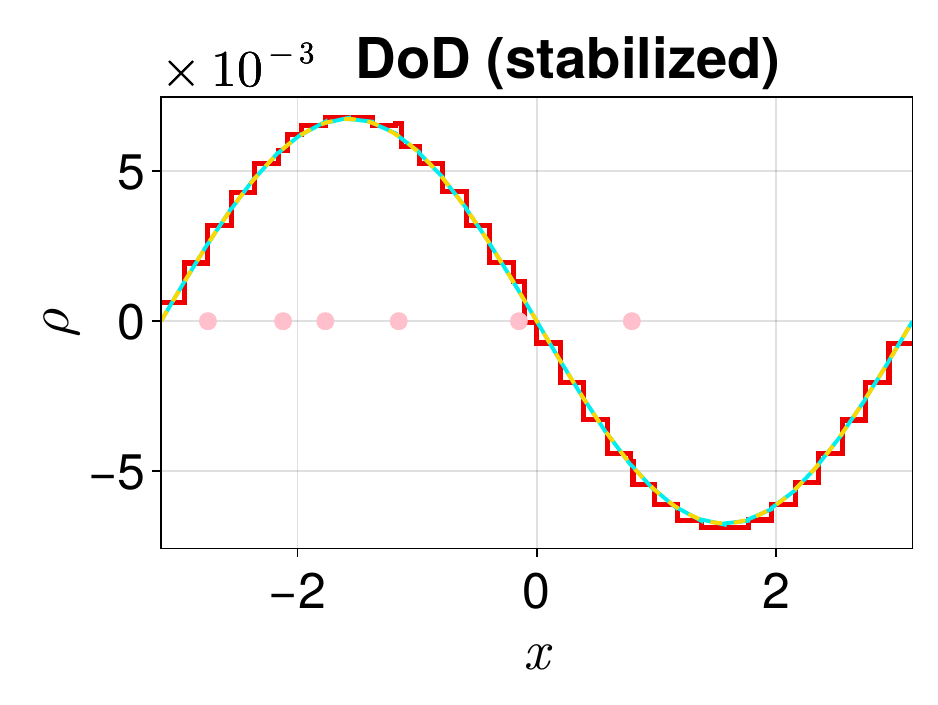}\\
  \end{minipage}
  \centering
  \includegraphics[width=0.7\textwidth, trim ={0 5.2cm 0 5.2cm} , clip]{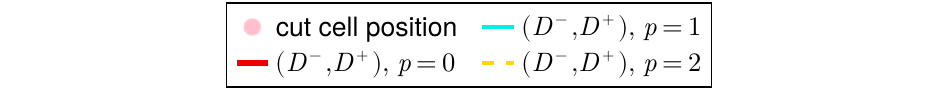}\\
  \caption{Simulation results for the heat equation, using the
    implicit midpoint RK scheme at the time $T=5$.}
  \label{fig:heat_simulation_implicit}
\end{figure}
We observe a behavior hinted at by the significant difference in condition
numbers: The DoD-stabilized scheme provides a stable solution, while the
unstabilized scheme is clearly unstable for every considered case.

Note that the hyperbolic CFL condition used here can also be exchanged by a
parabolic one, without a qualitative change in the solution, as the crucial
differences occur because of the very small cut cells.

\section{Summary and conclusions}
In this article, we introduced cut-cell-stabilized DG methods for
the telegraph equation in one spatial dimension, by extending the
domain-of-dependence stabilization. We proceeded by grasping the
spatial derivatives as linear transport terms and therefore applied
the DoD-terms of the literature to them by respecting the underlying
discrete velocity directions, inherited in this kinetic model. We
proved that including this stabilization also results in
formally asymptotic preserving schemes, using standard IMEX-RK schemes
in time. This also leads to a new cut-cell stabilization term for the
parabolic limit equation, whose advantages we numerically pointed out
even when using implicit time integration.
Further, we showed that using periodic SBP operators lead in general to
AP-explicit schemes using suitable IMEX-RK methods and that
the resulting semidiscretization is stable.
We further adapted the upwind-DoD schemes to obtain DoD-stabilized
periodic upwind SBP operators and proved that using central fluxes and
also upwind fluxes for $p=0$ leads to periodic (upwind) SBP operators.
Finally, we provide numerical results that verify the theoretical findings.
As the SBP property is closely related to entropy stability, we want to
highlight that these results will serve as a foundation to construct
an entropy-stable cut-cell stabilization for DG schemes, which will be
the topic of upcoming investigations.

\label{sec:summary}

\section*{Acknowledgments}

LP and HR were supported by the Deutsche Forschungsgemeinschaft
(DFG, German Research Foundation) under project number 528753982,
within the DFG priority program SPP~2410
(project number 526031774),
and the Daimler und Benz Stiftung (Daimler and Benz foundation,
project number 32-10/22).
GB and CE were supported by the Deutsche Forschungsgemeinschaft (DFG, German Research Foundation) within the DFG priority program SPP~2410
(project number 526031774) and under Germany's Excellence Strategy EXC
2044-390685587, and EXC 2044/2-390685587, Mathematics Münster: Dynamics–Geometry–Structure.
SO acknowledges support by the Deutsche Forschungsgemeinschaft (DFG, German Research Foundation) within the DFG priority program SPP~2410 (project number 526073189).

\printbibliography

\appendix
\section{Stable implementation of ARS schemes}
\label{sec:appendix}
In the following, we elaborate on implementation details of the IMEX Type II schemes.
Performing an asymptotic analysis numerically may require a careful implementation,
because the stiff part can lead to round-off errors, due to its magnitude. Using
64-bit floating point numbers (\texttt{Float64} in Julia) in our simulations, we could also observe that phenomenon for $\epsilon<10^{-7}$.
For implicit RK methods, an optimized implementation is given by
\cite[Chapter IV.8]{hairer1996solving}. This technique can be adapted for IMEX methods,
which provides better results, i.e., the expected asymptotic convergence is seen for
$\epsilon$ down to $10^{-10}$. Nevertheless, the round-off errors start to increase
again at that point. The reason is that not only the stiff part contributes to
large quantities, but also the explicit part by the coefficient of the term $b_h$ in
\eqref{eq:semidiscretization_bilinearform}, independent of cut cells and stabilization.
To resolve this issue, we focus on computing the internal stages of the method such that
no division by $\epsilon$ is required. As we just observed this for the Type II
schemes, we present our implementation specifically for the ARS schemes applied to
\eqref{eq:imex_telegraph_scheme}.
We denote $$\ds{k,l} = \prod\limits_{j=k}^{l} \left(\epsilon^2+\Delta t a_{jj}\right).$$
As $\tilde{a}_{1i}=a_{1i}=0, \; i=1, \hdots s$, we have $\rho^{(1)}=\rho^n, \; \tg^{(1)}=\tg^{n}$.
The internal stages for $\tg^{(k)}$ can be computed explicitly, by using the precomputed, explicit value
$\rho^{(k)}$ and solving for $\tg^{(k)}$, such that
\begin{align*}
    \tg^{(k)} &=\frac{1}{1+\frac{\Delta ta_{kk}}{\epsilon^2}}\left(\tg^n+\frac{\Delta t}{2\epsilon}\sum\limits_{i=1}^{k-1}\widetilde{a}_{ki}\left(D^+-D^-\right)\tg^{(i)}
                -\frac{\Delta t}{\epsilon^2}\sum\limits_{i=1}^{k-1}a_{ki}\left(\Dg\rho^{(i)}+\tg^{(i)}\right)\right).
\end{align*}
By factorizing an additional term $1/\epsilon^2$, we obtain a remaining stage update that does introduce
large quantities by the factor $1/\ds{k,k}$. As this also influences the internal stages of $\rho$, we introduce the more
error-resistant variables
\begin{equation*}
    \hat{\rho}^{(k)} = \ds{2, k-1}\rho^{(k)}, \;\; \hat{g}^{(k)} = \ds{2,k}\tg^{(k)},\quad k = 2, \hdots, s.
\end{equation*}
By also factorizing the respective $1/\ds{k,l}$, we can then rewrite \eqref{eq:imex_telegraph_scheme}
and fully avoid introducing large magnitudes while computing the internal stages by solving the system for $\hat{\rho}^{(k)}, \; \hat{g}^{(k)}$, which is given by
\begin{align*}
    \hat{\rho}^{(k)}  &= \left(\ds{2, k-1}\rho^n -\Delta t \Drho\sum\limits_{i=1}^{k-1}\widetilde{a}_{ki}\ds{i+1, k-1}\hat{g}^{(i)}\right), \\
    \hat{g}^{(k)}  &= \left(\epsilon^2\ds{2, k-1}\tg^{n}+\Delta t \left(\frac{\epsilon}{2}\left(D^+-D^-\right)\left(\sum\limits_{i=1}^{k-1}\widetilde{a}_{ki}\ds{i+1, k-1}\hat{g}^{(i)}\right)
                    - \Dg \sum\limits_{i=1}^{k}a_{ki}\ds{i, k-1}\hat{\rho}^{(i)} - \sum\limits_{i=1}^{k-1}a_{ki}\ds{i+1, k-1}\hat{g}^{(i)}\right)\right), \\
    \rho^{n+1}&=\rho^{(s)}=\frac{1}{\ds{2,s-1}}\hat{\rho}^{(s)},\\
    \tg^{n+1}&=\tg^{(s)}=\frac{1}{\ds{2,s}}\hat{g}^{(s)}.
\end{align*}
This method of computation is used to obtain the results for the ARS(4,4,3) scheme in Figure~\ref{fig:asymptotic_conv}.

\end{document}